\newtheorem{theorem}{Theorem}[section]
\newtheorem{corollary}[theorem]{Corollary}
\newtheorem{lemma}[theorem]{Lemma}
\newtheorem{proposition}[theorem]{Proposition}
\theoremstyle{definition}
\newtheorem{definition}[theorem]{Definition}
\newtheorem{example}[theorem]{Example}
\theoremstyle{remark}
\newtheorem{remark}[theorem]{Remark}
\numberwithin{equation}{section}
\newcommand{\abs}[1]{\lvert #1 \rvert}
\let\epsilon\varepsilon
\renewcommand\emptyset\varnothing
\let\phi\varphi
\newcommand{\GL}{\mathrm{GL}}
\newcommand{\pr}{\mathrm{pr}}
\renewcommand{\Pr}{\mathrm{pr}}
\newcommand{\Sub}{\mathrm{Sub}}
\newcommand{\SL}{\mathrm{SL}}
\newcommand{\PSL}{\mathrm{PSL}}
\newcommand{\Sym}{\mathrm{Sym}}
\newcommand{\Ad}{\mathrm{Ad}}
\newcommand{\ad}{\mathrm{ad}}
\newcommand{\Lie}{\mathrm{Lie}}
\newcommand{\Aut}{\mathrm{Aut}}
\newcommand{\Sch}{\mathrm{Sch}}
\newcommand{\Cay}{\mathrm{Cay}}
\newcommand{\Boom}{\mathrm{Boom}}
\newcommand{\BoomlV}{\mathrm{Boom^{\lV}}}
\newcommand{\lV}{\mathrm{lV}}
\newcommand{\Out}{\mathrm{Out}}
\newcommand{\Stab}{\mathrm{Stab}}
\newcommand{\rk}{\mathrm{rk}}
\newcommand{\Comm}{\mathrm{Comm}}
\newcommand{\rd}{\mathrm{R}}
\newcommand{\Miss}{\operatorname{Miss}}
\newcommand{\Env}{\operatorname{Env}}
\newcommand{\diag}{\operatorname{diag}}
\newcommand{\fin}{\operatorname{f}}
\newcommand{\glofield}{\mathbb{K}}
\newcommand{\locfield}{\Bbbk}
\newcommand{\arrow}{\rightarrow}
\newcommand{\trivgp}{\langle e \rangle}
\newcommand{\defeq}{\stackrel{\mathrm{def}}{=}}
\newcommand{\R}{\mathbb R}
\newcommand{\Q}{\mathbb Q}
\newcommand{\N}{\mathbb N}
\newcommand{\Z}{\mathbb Z}
\newcommand{\G}{\mathbb G}
\renewcommand{\H}{\mathbb H}
\newcommand{\E}{\mathbb E}
\newcommand{\T}{\mathbb T}
\newcommand{\U}{\mathbb U}
\newcommand{\B}{\mathbb B}
\newcommand{\Lc}{\mathcal{L}}
\newcommand{\Bc}{\mathcal{B}}
\newcommand{\Oc}{\mathcal{O}}
\newcommand{\PP}{\mathbb P}
\newcommand{\frakg}{\mathfrak{g}}
\newcommand\RSloop{\@ifnextchar\bgroup\RSloopa\RSloopb}
\newcommand\RSloopa[1]{\bgroup\RSloop#1\relax\egroup\RSloop}
\newcommand\RSloopb[1]%
\newcommand\X{0}
\newcommand\RS[1]%
\newcommand\RSdef[1]{\expandafter\def\csname RS:#1\endcsname}
\newlength\RSu
\begin{document}

\title[Strong subgroup recurrence]{Strong subgroup recurrence and the Nevo--Stuck--Zimmer theorem}

\author{Yair Glasner} 
\address{Yair Glasner, Ben-Gurion University of the Negev,
	Departement of Mathematics,
	Be'er Sheva, 8410501, Israel.
	{\tt yairgl@bgu.ac.il}
}

\author{Waltraud Lederle} 
\address{Waltraud Lederle, TU Dresden, Institute of Algebra, Zellescher Weg 12-14, 01069 Dresden, Germany.
{\tt waltraud.lederle\@@tu-dresden.de}
}

\thanks{Both authors were partially funded by Israel Science Foundation grant ISF 2919/19.
The first author acknowledges funding from the F.R.S.-FNRS and UCLouvain for financing a visit to UCLouvain in January 2020.
The second author was partly supported by the Early Postdoc.Mobility scholarship No. 175106 from the Swiss National Science Foundation and was a F.R.S.-FNRS postdoctoral researcher.}

\subjclass[2020]{20F65,37B20,20E42}

\date{\today}
\maketitle

\begin{abstract}
    Let $\Gamma$ be a countable group and $\Sub(\Gamma)$ its Chabauty space, namely the compact $\Gamma$-space consisting of all subgroups of $\Gamma$. We call a subgroup $\Delta \in \Sub(\Gamma)$ a boomerang subgroup if for every $\gamma \in \Gamma$, $\gamma^{n_i} \Delta \gamma^{-n_i} \rightarrow \Delta$ for some subsequence $\{n_i \} \subset \N$. Poincar\'{e} recurrence implies that $\mu$-almost every subgroup of $\Gamma$ is a boomerang, with respect to every invariant random subgroup $\mu$ of $\Gamma$. We establish for boomerang subgroups many density related properties, most of which are known to hold almost surely for invariant random subgroups. 
    
    Let $\glofield$ be a number field, $\Oc$ its ring of integers, $S$ a finite set of valuations including all the Archimedean valuations, and $\G$ an absolutely almost simple group defined over $\glofield$. Our main result is that if $\rk_\glofield \G \ge 2$ then any $\Gamma$ which is commensurable to the $S$-arithmetic group $\G(\Oc_S)$ has very few boomerang subgroups. Namely, every boomerang in $\Gamma$ is either finite and central or of finite index. In particular we recover Margulis' normal subgroup theorem as well as the Nevo-Stuck-Zimmer theorem for such lattices. 

    We include a short, accessible proof for the above theorem in the case that $\Gamma$ is commensurable to $\SL_n(\Z), \ n \ge 3$.
\end{abstract}
\tableofcontents

\section{Introduction}
Let $\Gamma$ be a countable discrete group and $\Sub(\Gamma)$ the space of all subgroups of $\Gamma$. The Chabauty topology, induced from the product topology on $\{0,1\}^{\Gamma}$, turns $\Sub(\Gamma)$ into a compact metrizable $\Gamma$-space, where the $\Gamma$-action is by conjugation. The fixed points for the dynamical system $(\Sub(\Gamma),\Gamma)$ are the normal subgroups of $\Gamma$, and the $\Gamma$-invariant Borel probability measures are known as {\it{invariant random subgroups}} or IRSs for short. If $\Gamma \curvearrowright (X,\Bc,\overline{\mu})$ is an action of $\Gamma$ on a standard probability space by measure preserving transformations (a p.m.p. action),  the stabilizer map $\Stab\colon X \rightarrow \Sub(\Gamma)$ defined by $\Stab(x) = \Gamma_x = \{\gamma \in \Gamma \ | \ \gamma x = x \}$ gives rise to an IRS $\mu = \Stab_{*}(\overline{\mu})$. It was shown in \cite{AGV:irs} that this construction is universal, in the sense that every IRS arises in this fashion. 

Consider the topological dynamical system $(\Sub(\Gamma),\Gamma)$. Restricting the action to any specific element $\gamma \in \Gamma$ yields a $\Z$-system. A point $\Delta \in \Sub(\Gamma)$ is called recurrent for this action if the forward orbit $\{\gamma^{n} \Delta \gamma^{-n} \mid n \in \N\}$ returns to every open neighborhood $\Delta \in U \subset \Sub(\Gamma)$ infinitely often. A subgroup $\Delta \in \Sub(\Gamma)$ is called a {\it{boomerang subgroup}} if it is a recurrent point with respect to every element of $\gamma \in \Gamma$. More explicitly a subgroup $\Delta \in \Sub(\Gamma)$ is a boomerang if for every $\gamma \in \Gamma$ and for every finite subset $S \subset \Gamma$ we have 
$$\Delta \cap S = \gamma^n \Delta \gamma^{-n}  \cap S \qquad  {\text{infinitely often.}}$$
If the weaker condition $\Delta \cap S \subset \gamma^n \Delta \gamma^{-n}  \cap S$ holds infinitely often for every $\gamma \in \Gamma$, we will refer to $\Delta$ as a lower Vietoris boomerang subgroup (lV-boomerang for short). Lower Vietoris boomerangs tend to admit useful stability properties that are not shared by the more restricted class of boomerang subgroups. We denote by $\Boom(\Gamma)$ and $\BoomlV(\Gamma) \subset \Sub(\Gamma)$ the subsets of boomerang and lV-boomerang subgroups, respectively.

Taking the intersection over all $\gamma \in \Gamma$, a Poincar\'{e} recurrence argument guarantees that $\mu(\Boom(\Gamma))=1$ for every IRS $\mu$ of $\Gamma$. Equivalently, for any p.m.p. action $\Gamma \curvearrowright (X,\Bc,\overline{\mu})$, we have $\Gamma_x \in \Boom(\Gamma)$ almost surely.

In fact we can go beyond p.m.p. actions.
We say that a nonsingular (i.e. measure class preserving) action $\Gamma \curvearrowright (X,\Bc,\overline{\mu})$ on a standard probability space is {\it{elementwise conservative}} (EC for short) if every $\gamma \in \Gamma$ acts as a conservative transformation; namely no $\gamma \in \Gamma$ admits a wandering set of positive measure. EC actions generalise the notion of p.m.p. actions. In an upcoming article with Tobias Hartnick we give many new examples of EC actions that do not come from p.m.p. actions, in the sense that there is no invariant measure in the given quasi-invariant measure class. The actions we construct are often totally non-free, hence give rise to boomerangs that do not come from IRSs, in the same sense. In a related upcoming paper, Le Ma\^{i}tre and Stalder use a similar construction to obtain free EC actions that do not come from p.m.p. actions.

Poincar\'{e} recurrence, which holds in the more general setting of conservative actions, again guarantees that $\Gamma_x \in \Boom(\Gamma)$ for $\overline{\mu}$-almost every $x \in X$. 
In particular, a statement that holds for every boomerang subgroup translates into a statement about all elementwise conservative actions, and in particular about all IRSs.
Our main theorem generalizes one of the central results about IRSs and p.m.p. actions to the more general setting of boomerangs and elementwise conservative actions:

\begin{theorem}  \label{thm:main}
Let $\G$ be an absolutely almost simple, connected, linear algebraic group defined over a number field $\glofield$. Let $S$ be a finite set of places of $\glofield$, containing all the infinite places, and $\mathcal{\Oc}_S$ the ring of $S$-integers in $\glofield$. Let $\Gamma$ be commensurable to the group $\G(\Oc_S)$. If $\rk_\glofield(\G) \ge 2$ then every lV-boomerang subgroup of $\Gamma$ either has finite index or is finite and central.
\end{theorem} 
As a corollary we obtain:
\begin{corollary} \label{cor:ec}
Let $\Gamma$ be as in Theorem \ref{thm:main}. Then every nonsingular ergodic elementwise conservative action $\Gamma \curvearrowright (X,\Bc,\overline{\mu})$ is either essentially free (up to a possible finite central kernel) or essentially supported on a finite orbit. 
\end{corollary}
This is a very interesting strengthening of the Nevo--Stuck--Zimmer theorem \cite{SZ:94}, which claims the same in the setting of p.m.p actions. One can see the generalization also in terms of the main theorem \ref{thm:main}: Margulis' normal subgroup theorem asserts that every normal subgroup $N \lhd \Gamma$ is either finite and central or of finite index. The Nevo--Stuck--Zimmer claims that the same holds, almost surely, for every IRS of $\Gamma$ and our theorem generalizes to boomerang subgroups.

Both of these famous deep theorems are known for a wider class of arithmetic groups than these treated in Theorem \ref{thm:main}. Most notably they do not require the higher $\Q$-rank assumption. Still, in the setting of Theorem \ref{thm:main} our new proof is surprisingly elementary and easy. In order to emphasize the simplicity of our argument we provide in Section \ref{sec:SLn} a short, self contained proof for the following corollary: 
\begin{corollary}\label{cor:sln}
Let $n \geq 3$ and $\Gamma$ be a finite index subgroup of $\SL_n(\Z)$. Then, every boomerang subgroup of $\Gamma$ is either contained in $\{\pm e\}$ or has finite index in $\Gamma$.
\end{corollary}

Another noteworthy corollary is the existence of {\it{boomerang simple}} groups. 
\begin{corollary} \label{cor:boom_simp}
In the setting of Theorem \ref{thm:main} assume that $\G$ is simply connected and that $\G(\glofield)$ is center free. Then $\Boom^{lV}(\G(\glofield)) = \{\G(\glofield),\trivgp\}$. This is the case for example for $\SL_n(\glofield)$ when $n \geq 3$ is odd and $\glofield \subset \R$. 
\end{corollary}
In particular a nonsingular, ergodic, nontrivial, elementwise conservative action of such a group is essentially free. This corollary will be deduced in Subsection \ref{sec:B_simple} from a slightly more general theorem stated there.

We  believe that the notion of boomerang subgroups is of independent interest. Boomerang subgroups can be interpreted as algebraic, deterministic counterparts to the more analytic notion of invariant random subgroups. It is interesting to ask which of the properties that are known to hold, almost surely, for IRSs are deterministically true for boomerangs. 
The following theorem can be thought of as a generalization of the Borel density theorem, or more precisely of the IRS version of this theorem \cite[Thm. 2.9]{7sam}.

\begin{theorem} \label{thm:BorelDensity}
Let $\G$ be a semi-simple, connected algebraic group defined over a field $\glofield$ of characteristic $0$ and $G = \G(\glofield)$. Let $\Gamma < G$ be a countable, Zariski dense subgroup. Assume that the kernel of the projection of $\Gamma$ to each simple factor of $G$ is finite. 
Then every lV-boomerang subgroup in $\Gamma$ is either finite and central or Zariski dense.
\end{theorem}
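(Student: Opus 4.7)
Let $\Delta$ be a boomerang subgroup of $\Gamma$ and write $H = \overline{\Delta}^{Z} \leq \G$ for its Zariski closure. The plan is first to prove that $H$ is normal in $\G$, then to extract the dichotomy from the structure of normal subgroups in a semi-simple group, and finally to treat the finite case using centralizers. The starting point is the following. Fix $\gamma \in \Gamma$ with boomerang sequence $n_i \to \infty$, and use Noetherianness of the Zariski topology to choose finitely many $\delta_1, \ldots, \delta_r \in \Delta$ with $H = \overline{\langle \delta_1, \ldots, \delta_r \rangle}^{Z}$. Chabauty convergence in the discrete group $\Gamma$ is pointwise, so for all sufficiently large $i$ each $\delta_j$ lies in $\gamma^{n_i}\Delta\gamma^{-n_i}$, whence $\{\delta_1,\ldots,\delta_r\} \subseteq \gamma^{n_i} H \gamma^{-n_i}$, giving $H \subseteq \gamma^{n_i} H \gamma^{-n_i}$. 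Since $\gamma^{n_i} H \gamma^{-n_i}$ is a conjugate of $H$, it has the same dimension and the same number of connected components, so the inclusion is in fact an equality; thus $\gamma^{n_i} \in N_\G(H)$ for all large $i$. The subgroup $\{n \in \Z : \gamma^n \in N_\G(H)\}$ of $\Z$ is therefore of finite index, and some positive power $\gamma^{m(\gamma)}$ lies in $N_\G(H)$.

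The crux of the argument is to promote this from a power of $\gamma$ to $\gamma$ itself. Since $N_\G(H)$ is Zariski closed, $\overline{\langle \gamma^{m(\gamma)}\rangle}^{Z} \subseteq N_\G(H)$; and when the algebraic group $A_\gamma := \overline{\langle \gamma \rangle}^{Z}$ is Zariski connected, a direct check (using the Jordan decomposition $\gamma = \gamma_s \gamma_u$) shows $\overline{\langle \gamma^{m(\gamma)}\rangle}^{Z} = A_\gamma$, hence $\gamma \in A_\gamma \subseteq N_\G(H)$. The strategy is therefore to show that the set $\{\gamma \in \Gamma : A_\gamma \text{ is Zariski connected}\}$ is Zariski dense in $\G$: the ``bad'' locus $\{\gamma \in \G : A_\gamma \text{ disconnected}\}$ is a countable union of proper Zariski closed subvarieties (each imposing a monomial relation among the eigenvalues of $\gamma_s$ equal to a nontrivial root of unity), and the $\R$-split hypothesis provides a plentiful supply of unipotent and ``generic semi-simple'' elements in $\G$ whose cyclic closures are manifestly connected. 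The main technical obstacle lies precisely here: verifying, for an arbitrary Zariski dense $\Gamma$ (not necessarily a lattice), that $\Gamma$ is not entirely swallowed by this bad countable union, so that the good set is Zariski dense in $\G$ and not merely in $\Gamma$. Granting this, $N_\G(H)$ contains a Zariski dense subset of $\G$, hence $N_\G(H) = \G$, i.e., $H \trianglelefteq \G$.

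With $H$ normal, semi-simplicity forces $H^0$ to be an almost-direct product of a subset of the simple factors of $\G$. If $H^0 = \G$, then $H = \G$ and $\Delta$ is Zariski dense; we are done. Otherwise, for a simple factor $\G_j$ missing from $H^0$, the projection $p_j(H) \subseteq \G_j$ is finite (since $H^0$ projects into the finite central intersection $H^0 \cap \G_j \subseteq Z(\G_j)$ and $H/H^0$ is finite); so $p_j(\Delta)$ is finite, and the hypothesis that $\ker(p_j|_\Gamma)$ is finite forces $\Delta$ itself to be finite. For such a finite $\Delta$, Chabauty convergence to a finite set becomes eventual equality, so $\gamma^{n_i} \in N_\Gamma(\Delta)$ for large $i$; post-composing with the homomorphism $N_\Gamma(\Delta) \to \Aut(\Delta)$ (which has finite image) shows that a further positive multiple of $\gamma^{n_i}$ lies in $C_\Gamma(\Delta)$. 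Re-running the connectedness-density argument from the previous paragraph, but now with the Zariski closed subgroup $C_\G(\Delta)$ in place of $N_\G(H)$, yields $C_\G(\Delta) = \G$, and therefore $\Delta \subseteq Z(\G) \cap \Gamma \subseteq Z(\Gamma)$, completing the proof.
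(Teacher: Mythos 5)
Your argument takes a genuinely different route, and you have correctly identified where it breaks down; that break is a real gap, not a deferrable detail. The reduction to ``$\gamma^{m(\gamma)} \in N_\G(H)$ for all $\gamma \in \Gamma$'' is sound (using Noetherianness, Chabauty convergence, and the dimension/component count to upgrade inclusion to equality). But the promotion step needs the set $\{\gamma \in \Gamma : \overline{\langle\gamma\rangle}^Z \text{ connected}\}$ to be Zariski dense in $\G$, and since $\Gamma$ is countable there is no a priori reason it cannot be swallowed by the countable union of proper closed subvarieties you describe. Ruling this out is essentially an existence theorem for ``generic'' elements in Zariski dense subgroups (in the spirit of Prasad--Rapinchuk), which is substantially heavier than anything else in the argument and does not follow from Zariski density alone. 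Your handling of the finite case re-runs the same density argument for $C_\G(\Delta)$ and therefore inherits the same gap.

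The paper avoids the normality-of-the-Zariski-closure route entirely. Assuming $H = \overline{\Delta}^Z$ is proper and non-central, it applies Chevalley's theorem to realize $H$ inside the stabilizer of a line in a rational representation of $\G$, projects to a nontrivial irreducible summand (automatically strongly irreducible because $\G$ is Zariski connected), produces a proximal element of $\Gamma$ in that representation via Goldsheid--Margulis, adjusts it by a Zariski-generic $\eta \in \Gamma$ so that its attracting point is not $\Delta$-fixed and the fixed line $[v]$ avoids the repelling hyperplane, and then contradicts the boomerang property using Lemma~\ref{lem:rec_subgroup_no_fixed_point}: the orbit $\gamma^k[v]$ is pushed to a point not fixed by $\Delta$. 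This sidesteps both the connectedness issue and any claim that $H$ is normal, at the cost of invoking Chevalley's theorem and Goldsheid--Margulis (which, for lattices, the paper remarks can be replaced by an elementary Selberg-type argument). If you want to salvage your approach, you would need to supply the generic-element existence result as an independent input; as written, the proof does not close.
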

\begin{corollary}
In the setting of the above theorem, any nontrivial IRS\footnote{When we say that an IRS admits a certain property we assert that this property holds almost surely. In particular an IRS is nontrivial means that it is nontrivial almost surely.}  of a Zariski dense subgroup in $G$ is supported on Zariski dense subgroups.
\end{corollary}
Even though it is not explicitly stated there, this corollary could also be deduced from the main theorem in \cite{DGLL:cat0}. It is natural to ask for a geometric version of Borel density for boomerang subgroups, similar in nature to the one discussed in loc. cit. or in \cite{Osin:IRS_desity}. As an example in this direction we will prove the following.
\begin{theorem} \label{thm:convergence}
Let $\Gamma \curvearrowright X$ be a non-elementary faithful convergence action of a group $\Gamma$ on a compact space $X$ and $\trivgp \ne \Delta \in \Boom^{lV}(\Gamma)$, then $\Lc \Gamma = \Lc \Delta$. 
\end{theorem}

One of our main motivations for studying boomerangs was to provide a new angle of attack on unknown cases of the Nevo--Stuck--Zimmer theorem. Prior works \cite{SZ:94,NZ:IFT, NZ:proj_factors,Levit:NZ,Creutz:stabilizers} established this theorem for irreducible actions of higher rank semisimple Lie groups (also over some local fields) and their irreducible lattices, under the assumption that at least one of the simple factors has property $(T)$. The paper 
\cite{Bader_Shalom} treats general products $G = G_1 \times G_2$, where each $G_i$ is just-non-compact, not Euclidean with property $(T)$. The papers \cite{CP:stabilizers,BBHP:char_product}  treat groups of the form $\Gamma = \SL_2(\Z_S)$, where $S$ is any nonempty set of primes. Such groups are irreducible lattices inside the (restricted) product $G=\SL_2(\R) \times \prod^{r}_{p \in S}\SL_2(\Q_p)$, for which none of the factors has property $(T)$. But the question is still open for the group $G$ itself and other irreducible lattices there, as well as for products of other rank-$1$ groups without property $(T)$. Our hope is that the next theorem and the corollaries that follow would be a first step in this direction. 

\begin{theorem} \label{thm:rec_dense}
Let $\locfield$ be a non-Archimedean local field of characteristic zero (i.e. a finite extension of $\Q_p$). Let $\G(\locfield)$ be the $\locfield$-points of a $\locfield$-isotropic, simple, algebraic group over $\locfield$ and $G = \G(\locfield)^{+}$ the subgroup generated by all unipotent elements\footnote{This definition is good enough for us as we are working over fields of characteristic zero. Over non-perfect fields a slightly different definition for $G$ is required.}. Let $\Gamma < G$ be a countable dense subgroup of $G$. 
Then every non-central lV-boomerang $\Delta \in \Boom^{lV}(\Gamma)$ is also dense in $G$. \end{theorem}
In fact in Section \ref{sec:products} we state and prove more general  ``semisimple'' versions, Theorem \ref{thm:rec_dense_ss_p} and Corollary \ref{cor:rec_dense_ss}.
The IRS-version of this theorem does not seem to appear in the literature.
\begin{corollary}
    Under the assumptions of Theorem \ref{thm:rec_dense}, let $\mu$ be an ergodic IRS on $\Gamma$ that is not supported on central subgroups. Then $G = \overline{\Delta}$
    for $\mu$-almost every subgroup $\Delta < \Gamma$.
\end{corollary}

Theorem \ref{thm:rec_dense} has an immediate consequence for irreducible lattices.

 \begin{corollary} \label{cor:star}
 Let $\Gamma < \SL_2(\Q_p) \times \SL_2(\Q_q)$ be an irreducible lattice and $\Delta < \Gamma$ a non-central lV-boomerang subgroup.
 Then $\Delta$ has dense projections.
\end{corollary}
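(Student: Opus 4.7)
The plan is to reduce to Theorem \ref{thm:rec_dense} by pushing $\Delta$ forward along each coordinate projection. Denote by $\pi_p \colon \Gamma \to \SL_2(\Q_p)$ and $\pi_q \colon \Gamma \to \SL_2(\Q_q)$ the restrictions of the two projections; by irreducibility of $\Gamma$ their images are countable and dense in the respective factors. If we show that $\pi_p(\Delta)$ is a non-central boomerang subgroup of $\pi_p(\Gamma)$, Theorem \ref{thm:rec_dense} yields $\overline{\pi_p(\Delta)} = \SL_2(\Q_p)$, and the $q$-case is identical.

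The key preparatory observation is a Borel-density-style statement: the kernel $\ker(\pi_p|_\Gamma) = \Gamma \cap (\{I\} \times \SL_2(\Q_q))$ is finite and central. Indeed, $\pi_q$ identifies it with a discrete subgroup of $\SL_2(\Q_q)$ that is normalized by the dense subgroup $\pi_q(\Gamma)$, so its closure is a proper closed normal subgroup of $\SL_2(\Q_q)$, hence contained in $\{\pm I\}$. The same argument places $\Gamma \cap (\{\pm I\} \times \SL_2(\Q_q))$ inside $Z(G) = \{\pm I\}^2$. With the kernel finite, the preimage of any $g \in \pi_p(\Gamma)$ under $\pi_p$ is a finite subset of $\Gamma$, from which it follows that Chabauty convergence $\Lambda_n \to \Lambda$ in $\Sub(\Gamma)$ implies $\pi_p(\Lambda_n) \to \pi_p(\Lambda)$ in $\Sub(\pi_p(\Gamma))$: the condition $g \in \pi_p(\Lambda)$ means one of finitely many preimages lies in $\Lambda$, and by Chabauty convergence this matches $g \in \pi_p(\Lambda_n)$ for $n$ large. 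Applying this to $\Lambda_n = \gamma^{n_i}\Delta\gamma^{-n_i}$ for any lift $\gamma$ of $g$, the boomerang property of $\Delta$ at $\gamma$ transfers to the boomerang property of $\pi_p(\Delta)$ at $g$.

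It remains to verify that $\pi_p(\Delta)$ is not central in $\pi_p(\Gamma)$. If it were, density of $\pi_p(\Gamma)$ in $\SL_2(\Q_p)$ would force $\pi_p(\Delta) \subset Z(\SL_2(\Q_p)) = \{\pm I\}$, so $\Delta$ would be contained in $\Gamma \cap (\{\pm I\} \times \SL_2(\Q_q)) \subset Z(G)$ by the preparatory step, contradicting the non-centrality of $\Delta$ in $\Gamma$. Theorem \ref{thm:rec_dense} then produces $\overline{\pi_p(\Delta)} = \SL_2(\Q_p)$, and symmetrically $\overline{\pi_q(\Delta)} = \SL_2(\Q_q)$. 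The principal obstacle in this argument is showing that pushforward along $\pi_p$ sends boomerangs to boomerangs; this rests on the finiteness of the kernel, which in turn is the substance of the Borel density type lemma above. All remaining steps are routine once these two points are settled.
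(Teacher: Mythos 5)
Your proof is correct and takes the approach the paper implicitly has in mind: the authors call the corollary an ``immediate consequence'' of Theorem \ref{thm:rec_dense} and omit the argument, and you have supplied exactly the two details that make it immediate, namely that $\ker(\pi_p|_\Gamma)$ is finite and central (a Borel-density observation using that a closed subgroup normalized by a dense subgroup is normal), and that a surjection with finite kernel induces a Chabauty-continuous map on subgroup spaces, so $\pi_p(\Delta)$ is a non-central boomerang in the countable dense subgroup $\pi_p(\Gamma)$. Theorem \ref{thm:rec_dense} then applies to each factor, as you say.
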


The IRS-version of this corollary is contained in the proof of \cite[Thm. 8.3]{Creutz:stabilizers}, but not stated there explicitly. Corollary \ref{cor:star} should be compared to \cite[Question 1.5]{FisherMjVanLimbeek2022}. There, the authors ask whether a discrete subgroup with dense projections in a product of semisimple groups over local fields already has to be a lattice. Recently a negative answer to their question was suggested in \cite{akhmedov2024irreduciblediscretesubgroupsproducts} in the form of a discrete infinite co-volume subgroup in $\SL_2(\R) \times \SL_2(\R)$ admitting dense projections. Still the question remains interesting for special families of groups. Particular interesting examples include infinite index subgroups in an irreducible lattice. Most interesting from our point of view would be any non-central infinite index boomerang subgroups  as in Corollary \ref{cor:star}. In this context, a positive result under some confinement assumptions was recently announced by Bader--Gelander--Levit.  

\subsection*{Comparison to other proofs.}
Our proof of Theorem \ref{thm:main} is algebraic in nature. It relies heavily on the $\glofield$-structure on the arithmetic group $\Gamma$ and  does not generalize to the case where $\rk_{\glofield}(\G) <2$. There were prior proofs, of algebraic nature, both for Margulis' normal subgroup theorem and to the Nevo--Stuck--Zimmer theorem. Indeed, even before Margulis, the normal subgroup theorem for $\Gamma = \SL_n(\Z), \ n\ge 3$ was established, by Bass--Lazzard--Serre \cite{BLS:CSP} and independently by Mennicke  \cite{Mennicke:CSP}, as corollary of their work on the congruence subgroup property. Bekka \cite{Bekka:CR} provided a beautiful algebraic proof for the Nevo--Stuck--Zimmer theorem as a corollary of his character rigidity theorem for the same group. 


\subsection*{Organization of the paper.}
In Section \ref{sec:generalities} we give definitions and in Section \ref{sec:preliminaries} we discuss basic properties pertaining to boomerang subgroups. In Section \ref{sec:SLn} we present a short, elementary and self-contained proof of our main theorem for $\Gamma = \SL_n(\Z)$ for $n \ge 3$ and its finite index subgroups. In particular, this gives a very simple proof for the Nevo--Stuck--Zimmer theorem and for the Margulis normal subgroup theorem in this specific case. Theorem \ref{thm:BorelDensity} about Zariski density is proven in Section \ref{sec:BD}. The main theorem \ref{thm:main} and all of its corollaries are proven in Section \ref{sec:main}. The proofs here require some structure theory of simple Lie groups, but other than that they are not much more involved than the $\SL_n(\Z)$ case and follow the same strategy. Section \ref{sec:products} contains the proof of Theorem \ref{thm:rec_dense}.

\subsection*{Acknowledgements}
We thank Fran\c{c}ois Thilmany for his indispensable help with the intricacies of algebraic groups, for finding for us the beautiful but hard to find  work of Cutkosky and Srinivas \cite{CS1993}, and for numerous helpful discussions. We thank Adrien Le Boudec for suggesting Corollary \ref{cor:boudec} and Pierre-Emmanuel Caprace for helpful conversations and for hosting the visit of the first author at UCLouvain. Further thanks go to Tsachik Gelander for helpful comments and Gil Goffer for her careful reading and feedback.

Finally we thank two anonymous referees whose comments greatly improved the paper. The original version of the main theorem was restricted to the case where $\glofield = \Q$, $\G$ is $\Q$-split and $S = \{\infty\}$. Also the current version of Theorem \ref{thm:referee} and its proof are simpler and more general than our original proofs. We are thankful to the referees whose encouragement and many suggestions enabled the current improvements.  

\section{Boomerang subgroups} \label{sec:generalities}
Let $\Gamma$ be a countable group and $(X,\Gamma)$ a $\Gamma$-flow, i.e. $\Gamma$ acts on the compact topological space $X$ by homeomorphisms.
\begin{definition}
An element $\gamma \in \Gamma$ will be called a {\it{recurrent direction}} for a point $x \in X$ if the forward orbit $\{\gamma^n x \ | \ n \in \N\}$ returns to every open neighborhood $x \in O \subset X$. A point $x \in X$ is called a {\it{boomerang point}} if every $\gamma \in \Gamma$ is a recurrent direction for $x$. We denote by $\rd_{\Gamma}(x)$ the recurrent directions for a point $x$, and by $\Boom(X,\Gamma) = \{x \in X \ | \ \rd_{\Gamma}(x) = \Gamma\}$ the set of all boomerang points. 
\end{definition}
\begin{remark}
We will write $\rd_{\Gamma}^{\tau}(x)$ in order to make the dependence of the recurrent directions on the topology $\tau$ explicit. If $\tau_1 \subset \tau_2$ then $\rd_{\Gamma}^{\tau_2}(x) \subset \rd_{\Gamma}^{\tau_1}(x)$.  
\end{remark}
In other words, a point is a boomerang if it is a recurrent point, in the usual topological dynamical sense of the word, with respect to every element $\gamma \in \Gamma$ separately. Note that every torsion element of $\Gamma$ is automatically a recurrent direction for every $x \in X$. Clearly any $\Gamma$-equivariant map of such flows $\phi:X \rightarrow Y$ takes boomerang points to boomerang points.


Now let $\Sub(\Gamma)$ be the space of all subgroups of $\Gamma$. There are two relevant compact topologies on this set: 

\begin{definition} For a countable group $\Gamma$ and a subset $F \subset \Gamma$, we denote
\begin{eqnarray*}
\Env(F) & = & \{\Delta \in \Sub(\Gamma) \ | \ F \subset \Delta\} \\
\Miss(F) & = & \{\Delta \in \Sub(\Gamma) \ | \ \Delta \cap F = \emptyset\}.
\end{eqnarray*}
The collection of sets $\{\Env(E) \cap \Miss(F) \ | \ |E|,|F|<\infty\}$ forms a basis for the \emph{Chabauty topology} on $\Sub(\Gamma)$. The collection $\{\Env(F) \ | \ \abs{F} < \infty\}$ forms a basis for the \emph{lower Vietoris topology}. 
\end{definition}

\begin{remark} The upper Vietoris topology is defined similarly using the basis $\{\Miss(F) \ | \ \abs{F}<\infty\}$. One interesting feature is that for any topological dynamical system  $(X,\Gamma)$, the stabilizer map 
$$X \rightarrow \Sub(\Gamma), \qquad x \mapsto \Gamma_x$$
is continuous when $\Sub(\Gamma)$ is endowed with the upper Vietoris topology. We mention this for the sake of completeness, but for us, the upper Vietoris topology will not be useful and we will not appeal to it again in this paper. 
\end{remark}
The Chabauty topology coincides with the Tychonoff topology induced on the closed subset $\Sub(\Gamma) \subset \{0,1\}^{\Gamma}$. In particular it is compact and metrizable. The lower Vietoris topology, which by definition is coarser, is still compact. But it is far from being Hausdorff or even $T_1$. In fact $\overline{\{\Delta\}}^{lV} = \Sub(\Delta) \subset \Sub(\Gamma)$, so the only closed point is the trivial subgroup.

\begin{remark}
For a general subgroup $\Delta < \Gamma$ the set $\rd_\Gamma(\Delta)$ is not a subgroup of $\Gamma$.
For example, if $\Gamma = \langle a,b \rangle$ is the free group on two generators and $\Delta = \langle a^2, b^2 \rangle < \Gamma$ then clearly $a, b \in \rd_{\Gamma}(\Delta)$ but $ab \notin \rd_\Gamma(\Delta)$.
\end{remark}

Now we give the main definition of this article.

\begin{definition}
  Let $\Gamma$ be a countable discrete group. A subgroup $\Delta \in \Sub(\Gamma)$ is called a {\it{boomerang subgroup}}, or simply a \emph{boomerang}, if it is a boomerang point of $\Sub(\Gamma)$ with respect to the Chabauty topology. Similarly, it is called an \emph{lV-boomerang subgroup}, or an \emph{lV-boomerang} for short, if it is a boomerang point of $\Sub(\Gamma)$ with respect to the lower Vietoris topology. We denote the corresponding subsets of $\Sub(\Gamma)$ by $\Boom(\Gamma)$ and $\BoomlV(\Gamma)$. 
\end{definition}

\begin{remark}
Lower Vietoris boomerangs already appeared, and played an important role, in the paper \cite[Def. 2.4]{Gla:lin_irs}. There they were called {\it{recurrent subgroups}}. We decided to change their name in order to avoid conflict with other uses of recurrence in topological dynamics. 

There is a tension between the concepts of boomerang and lV-boomerang subgroups. The word boomerang was paired with the Chabauty topology, which is the canonical topology on $\Sub(\Gamma)$, and is connected to IRSs via Poincar\'e recurrence. In this way boomerangs seem to be more natural. On the other hand lV-boomerangs are the more general objects and it is important that we can prove all of our theorems for them. Aside from their wider generality, Lower Vietoris boomerang subgroups exhibit better stability properties. Lemma \ref{lem:surj_image} below shows that the image of an lV-boomerang subgroup $\Delta < \Gamma$, under a surjective homomorphism $\varphi:\Gamma \rightarrow \Sigma$, is again an lV-boomerang. Example \ref{eg:boom_image} shows that $\varphi(\Delta)$ need not be boomerang; even when $\Delta \in \Boom(\Gamma)$. Applying such a surjective homomorphism we need to pass to a lV-boomerang subgroup in the proof of Theorem \ref{thm:rec_dense} and it becomes crucial that all theorems so far have been proved in this generality. Theorem \ref{thm:rec_dense} in turn is used in the proof of the Main theorem \ref{thm:main}. Thus their stronger robustness property becomes an important reason to keep track of lV-boomerangs.
\end{remark}

Let us recall the following definition from the introduction
\begin{definition} \label{def:ec}
A measurable action of a group on a standard probability space $\Gamma \curvearrowright (X,\Bc,\mu)$ is called {\it{elementwise conservative}} if it is nonsingular (i.e. measure class preserving) and  every $\gamma \in \Gamma$ acts as a conservative transformation. 
\end{definition}
\begin{example} \label{eg:boom}
Here are a few sources for boomerang subgroups: 
\begin{enumerate}
\item \label{itm:n} normal subgroups.
\item \label{itm:nfi} finite index subgroups, or more generally subgroups whose normalizer is of finite index. 
\item \label{itm:lf+} Every subgroup of a torsion group  is boomerang.
\item \label{itm:boring} subgroups whose normalizer contains a positive power of every element; these are exactly the subgroups of $\Gamma$ whose orbit under the $\gamma$-action is finite for every $\gamma \in \Gamma$.
\item \label{itm:irs} If $\mu$ is an IRS of $\Gamma$ then $\mu$-almost every subgroup of $\Gamma$ is a boomerang. 
\item \label{itm:ec} Let $\Gamma \curvearrowright (X,\Bc,\overline{\mu})$ be an elementwise conservative action. Then the stabilizer $\Gamma_x \in \Boom(\Gamma)$ for $\overline{\mu}$-almost every $x \in X$.
\end{enumerate}
\end{example}
\begin{proof}
(\ref{itm:boring}) is clear, and (\ref{itm:n}), (\ref{itm:nfi}) and (\ref{itm:lf+}) are special cases of it. Then, (\ref{itm:irs}) is a special case of (\ref{itm:ec}), so it remains to show (\ref{itm:ec}).

The map $\Stab\colon X \rightarrow \Sub(\Gamma)$ is Borel and $\Gamma$-equivariant. We set $\mu = \Stab_*(\overline{\mu})$.
Thus $(\Sub(\Gamma),\mu)$ is an elementwise conservative $\Gamma$-space. 
Now, for every $\gamma \in \Gamma$,  Poincar\'{e} recurrence guarantees that $\mu$-almost every $\Delta \in \Sub(\Gamma)$ is a $\gamma$-recurrent point.
Since $\Gamma$ is countable, $\mu$-almost every subgroup of $\Gamma$ is a boomerang subgroup.
Hence $1 = \mu(\Boom(\Gamma)) = \overline{\mu}(\Stab^{-1}(\Boom(\Gamma))) = \overline{\mu}(\{x \in X \mid \Gamma_x \in \Boom(\Gamma)\})$.
%
%
\end{proof}
In view of this, Corollary \ref{cor:ec} follows directly from our main theorem:
\begin{proof}[Proof of Corollary \ref{cor:ec}]
Let $\Gamma \curvearrowright (X,\Bc,\overline{\mu})$ be an EC action. From Example \ref{eg:boom} (\ref{itm:ec}) $\Gamma_x \in \Boom(\Gamma)$ for $\overline{\mu}$-almost every $x \in X$. In view of Theorem \ref{thm:main} we deduce that for $\mu$-almost every $x \in X$ the group $\Gamma_x$ is either of finite index or finite and central. In the first case, ergodicity implies that there is one fixed finite central kernel for the action. In the second case, ergodicity implies that the measure is supported on a finite orbit. 
\end{proof}

\begin{remark}
As mentioned in the introduction, two upcoming papers, one joint with Tobias Hartnick and another one of Le Ma\^{i}tre and Stalder, give examples of EC actions. Many of these give rise to boomerangs that do not come from p.m.p. actions.
\end{remark}
\begin{remark} 
\label{rem:boom_non_irs}
One ad-hoc construction of boomerang subgroups that do not come from p.m.p. actions utilizes Example (\ref{itm:lf+}) above. In fact the abundance of boomerangs in a torsion group $\Gamma$ can be pulled back via an epimorphism $\phi \colon F_{\infty} \twoheadrightarrow \Gamma$ to the free group $F_{\infty}$. Here, every subgroup inside the closed subset $$\Env(\ker(\phi)) = \{\Delta \in \Sub(F_{\infty}) \ | \ \ker(\phi) < \Delta\}$$ is boomerang.
 In contrast, by \cite[Cor. 3.3]{PetersonThom2016}, if $\Gamma = \PSL_m(k)$, where $m \geq 2$ and $k$ is a locally finite countable field, every IRS supported on this set is of the form $\lambda \delta_{\ker(\phi)} + (1-\lambda)\delta_{F_{\infty}}, \ \lambda \in [0,1]$. 
 
 Similarly, if $\Gamma$ is a Tarski monster, it cannot have any IRSs because it has only countably many subgroups, so every ergodic IRS is supported on a finite orbit, and there are no such in $\Sub(\Gamma)$ other than the two trivial ones. Thus we can replace the free group above by a finitely generated free group. In fact, since any non-elementary Gromov hyperbolic group $\Gamma$ maps onto a Tarski monster $\phi \colon \Gamma \twoheadrightarrow T$, we can find in any such group a closed set $\Env(\ker(\phi)) \subset \Sub(\Gamma)$ with the same properties as above. Note that $\Env(\ker(\phi))$, which is naturally homeomorphic to $\Sub(T)$, is countable in this case.  
\end{remark}

\begin{remark} 
The notion of boomerang subgroups can also be defined for a second countable, locally compact group $G$. A subgroup $\Delta < G$ in a topological group is called a {\it{boomerang subgroup}} if $\rd_{G}(\Delta)$ is dense in $G$. With this definition, all the Examples \ref{eg:boom} remain valid, for the same reasons. Note that to retain the basic property that every IRS is supported on $\Boom(G)$, this topological version of the definition is needed; even in the most basic examples. For example $\diag(a,a^{-1})$ with $a > 1$ fails to be a recurrent direction for $\SL_2(\Z) \in \Sub(\SL_2(\R))$. On the other hand, it turns out that $\rd_{G}(\Delta)$ is always a $G_{\delta}$-subset of $G$, so that when $\Delta$ is a boomerang $\rd_{G}(\Delta)$ is not merely dense, it is a residual set. We plan to work on boomerangs in locally compact groups in the future, and mention this here only in passing.
\end{remark}

\subsection*{Visualization via Schreier graphs} 
Let $\Gamma = \langle S \rangle$ be a finitely generated group, with a given choice of a finite symmetric generating set $S$. One can identify, in a natural way, $\Sub(\Gamma)$ with the space of all pointed connected $S$-Schreier graphs of $\Gamma$. For any $\gamma \in \Gamma$, written as a word in the generators $\gamma = \omega(S)$, the conjugation action $\Delta \mapsto \gamma \Delta \gamma^{-1}$ on $\Sub(\Gamma)$ is then identified with the basepoint shift action $(X,o) \mapsto (X,\gamma o)$ at the level of Schreier graphs. Graphically, one passes from the vertex $o$ to $\gamma o$ by following the labeled edges according to the word $\omega(S)$.

With this terminology in place, a pointed Schreier graph $(X,o)$ corresponds to a boomerang if and only if, for every $R>0$ and every $\gamma = \omega(S)$, following the sequence of vertices 
$$o_0=0, \,\, o_1=\omega(S) o, \,\, o_2 =\omega^2(S)o, \,\,\ldots, \,\, o_n =\omega^n(S)o, \,\,\ldots$$
we are going to find some $n$ such that the balls 
$B_{X}(o,R) \cong B_{X}(o_n,R)$ are isomorphic. This isomorphism is in the category of Schreier graphs, namely it respects the base point and the edge labels. 

Similarly, a Schreier graph $(X,o)$ as above represents an lV-boomerang if and only if for every $R >0$ we can find an $n$ with a center preserving, edge label preserving graph homomorphism $\psi \colon B_{X}(o,R) \to B_{X}(o_n,R)$.

\begin{example} \label{eg:lV_boom}
Let $F_2 = \langle a,b\rangle$ be the free group with free generators $a,b$.
Define $\varphi \colon F_2 \rightarrow \Z$ via $\varphi(a)=1, \varphi(b)=0$, and take $N = \ker(\varphi)$. Since $N$ is normal it is clearly a boomerang, and it is easy to check that $N$ is freely generated by $\{n_i = a^nba^{-n} \ | \ n \in \Z\}$. 
Now let us define two homomorphisms $\eta_i \colon N \rightarrow \Z/2\Z$ via 
$$\eta_1(n_i) = \left\{ \begin{matrix} 1 & n = 0 \\ 0 & n \ne 0 \end{matrix} \right. , \qquad \qquad 
\eta_2(n_i) = \left\{ \begin{matrix} 0  & n = 0 \\ 1 & n \ne 0 \end{matrix} \right. .$$
Set also $\Delta_i = \ker(\eta_i)$, these are normal index two subgroups of $N$. 
The Schreier graph of $\Delta_1$ is depicted in Figure \ref{fig:Schreier}. The Schreier graph of $\Delta_2$ is obtained from that of $\Delta_1$ by flipping the roles of the two types of $b$ edges.
It is easy to verify, using the above criteria, that $\Delta_1$ is an lV-boomerang but not a boomerang subgroup of $F_2$. On the other hand, $\Delta_2$ is not even an lV-boomerang.
\begin{figure}
\begin{tikzcd}
    \cdots\ar[r,"a"]& 
    \bullet\ar[r,"a"]\ar[loop,"b",->, in=60,out=120,looseness=12]& 
    \bullet\ar[r,"a"]\ar[loop,"b",->, in=60,out=120,looseness=12]& 
    \bullet\ar[r,"a"]\ar[d,"b" out=-120,bend left]& 
    \bullet\ar[r,"a"]\ar[loop,"b",->, in=60,out=120,looseness=12]& 
    \bullet\ar[r,"a"]\ar[loop,"b",->, in=60,out=120,looseness=12]& 
    \cdots 
    \\
    \cdots\ar[r,"a"]& 
    \bullet\ar[r,"a"]\ar[loop,"b",->, in=-60,out=-120,looseness=12]& 
    \bullet\ar[r,"a"]\ar[loop,"b",->, in=-60,out=-120,looseness=12]& 
    \star \ar[r,"a"]\ar[u,"b" out=-120,bend left]& 
    \bullet\ar[r,"a"]\ar[loop,"b",->, in=-60,out=-120,looseness=12]& 
    \bullet\ar[r,"a"]\ar[loop,"b",->, in=-60,out=-120,looseness=12]& 
    \cdots 
\end{tikzcd}
\caption{The Schreier graph of an lV-boomerang which is not a boomerang.}
\label{fig:Schreier}
\end{figure}
\end{example}

As promised, we now use Schreier graph methods to demonstrate that the image of a boomerang subgroup under a surjective homomorphism $\varphi 
\colon \Gamma \rightarrow \Sigma$ need not be a boomerang anymore.

\begin{example}\label{eg:boom_image}
Let $\Sigma = \langle a,b \rangle$ and $\Gamma = \langle a,b,c \rangle$ be non-abelian free groups. Define a projection $\varphi\colon\Gamma 
 \rightarrow \Sigma$ by setting $\varphi(c)=1$. Then there is $\Delta \in \Boom(\Gamma)$ such that $\varphi(\Delta) = \Delta_1$ is the lV-boomerang subgroup that is not boomerang given in Example \ref{eg:lV_boom}.

Set $\Delta_2 := \phi^{-1}(\Delta_1)$.
The Schreier graph $\Sch(\Gamma,\Delta_2,S)$, with $S=\{a,b,c\}$, is depicted at the bottom of Figure \ref{fig:boom_image} with base point $\star$.

We proceed to define the group $\Delta$ via its Schreier graph $X = \Sch(\Gamma, \Delta, S)$,  restricting ourselves to graphs satisfying the following properties: 
\begin{itemize}
\item Erasing all the $b$-labeled edges, will leave a disjoint union of trees, all isomorphic to the Cayley tree of $\langle a,c \rangle$.
\item The $b$-labeled edges will always assume either one of two possible forms:

\begin{center}
a {\it{dagger}} 
\begin{tikzcd}
    \bullet \ar[r,"b" out=-120,bend left] & 
    \bullet \ar[l,"b" out=-120,bend left]
\end{tikzcd}
or a {\it{loop}} 
\begin{tikzcd} 
    \bullet \ar[loop,"b",->, in=60,out=120,looseness=4]
\end{tikzcd}
\end{center}
\item There exists morphism $f \colon \Sch(\Gamma,\Delta,S) \to \Sch(\Gamma,\Delta_2,S)$ of Schreier graphs.
\end{itemize}
It is known from covering space theory that the existence of $f$ as above is equivalent 
to $\Delta < \Delta_2$, i.e. equivalent to $\phi(\Delta) < \Delta_1$.

Assume we constructed $X$ satisfying these conditions, we now explain what we need in order to have $\phi(\Delta)=\Delta_1$.
Consider the obvious inclusion $i \colon \Sch(\Sigma,\Delta_1,\{a,b\}) \to \Sch(\Gamma,\Delta_2,S)$. Then, $\phi(\Delta) = \Delta_1$ if and only if for each closed path $\alpha \in \pi_1(\Sch(\Sigma,\Delta_1,\{a,b\}),\star)$ there exists a closed path $\tilde \alpha \in \pi_1(\Sch(\Gamma,\Delta,S),\diamond)$ with $i^{-1}(f(\tilde \alpha)) = \alpha$.

After having established what we need to achieve, we proceed to define such graphs, and the map $f$, inductively.
We are actually doing a probabilistic construction, meaning that some choices are random, but almost surely will deliver what we need.
We start with the obvious map $f_1\colon \Cay(F_2,\{a,c\}) \rightarrow \Sch(\Gamma,\Delta_2,S)$. Now assume we have already constructed 
$f_n\colon X_n \rightarrow \Sch(\Gamma,\Delta_2,S)$ with $X_n$ a partially defined Schreier graph satisfying the conditions above but with some of the vertices not incident to $b$-labeled incoming and outgoing edges. Let's refer to such vertices as vacant vertices. To define $f_{n+1}\colon X_{n+1} \rightarrow \Sch(\Gamma,\Delta_2,S)$ we choose, for each vacant vertex $x$ separately, between two possibilities:
\begin{itemize}
\item[L.] Attach a loop.
\item[D.] Attach a dagger, with a new $\Cay(F_2,\{a,c\})$ attached to its other side, and extend the map $f_{n+1}$ to the new tree in the only possible way.
\end{itemize}
When making this choice, we only have to obey the following rule:
If $f_n(x)$ is one of the two vertices incident to the dagger in $\Sch(\Gamma,\Delta_1,S)$ we must choose a dagger; this will ensure that the map $f_{n+1}$ is well defined. In all other cases we decide by tossing a $\{D,L\}$-coin, with all coins along the process chosen uniformly and independently. 

Figure \ref{fig:boom_image} depicts the first step of this process. The basepoint is depicted by $\diamond$. To emphasize the difference between the two types of vacant vertices: daggers attached to new trees are drawn only at vertices where they must appear. The new tree at the other end of every such dagger is denoted symbolically by $\RS{lrIlir}$. On all other vacant vertices it should be decided randomly whether to attach a $b$-loop or a $b$-dagger with a whole new tree at its other end.

It is clear that $f_n$ is surjective for every $n \ge 2$. It is also easy to verify that $X = \bigcup_{n\in\N}X_{n}$ is indeed a Schreier graph of a group  $\Delta \in \Sub(\Gamma)$ and the map $f = \bigcup_{n \in \N} f_n \colon X \rightarrow \Sch(\Gamma,\Delta_2,S)$ guarantees that $\varphi(\Delta) \subset \Delta_1$. Both $X=X_{\omega}$ and $\Delta= \Delta_{\omega}$ are random objects, depending on a point $\omega \in \Omega$ in a probability space signifying all of our choices, but for better readability we omit $\omega$ from the notation. 

To verify that $\varphi(\Delta) = \Delta_1$ we have to check that
each closed path starting at $\star$ in the Schreier graph of $\Delta_1 < \Sigma$ can be enriched with $c$-loops so that the resulting closed path, which is now a closed path in $\Sch(\Gamma,\Delta_2,S)$, lifts to a closed path in $\Sch(\Sigma,\Delta,S)$.
But this is possible as soon as
$f^{-1}(x)$ contains at least one vertex incident to a loop for every vertex $x \in \Sch(\Gamma,\Delta_2,S)$ that is not attached to the dagger. We claim that this holds almost surely. Since $f$ is surjective $f^{-1}(x)$ is not empty. Hence $f^{-1}(x)$ is infinite, as every vertex there is involved in a $\{a,c\}$-tree. The probability that such an infinite locus involves only daggers is zero.

We claim that $\Delta$ is almost surely a boomerang. For this fix a radius $R$ and an element $\gamma = w(a,b,c)$. Denoting by $\diamond = e\Delta \in \Sch(\Gamma,\Delta,S)$, $\star = e\Delta_1 \in \Sch(\Gamma,\Delta_1,S)$ the basepoints of our two Schreier graphs, we have to prove that for some $n$ the two $R$-balls $B_X(\diamond,R)$ and  $B_X(\gamma^n \diamond,R)$ are isomorphic as edge labeled graphs.  Let $\zeta\colon \Gamma \rightarrow \Z$ be given by $\zeta(a)=1,\zeta(b)=\zeta(c)=0$. Note that $\zeta(\gamma')=0$ if and only if $\gamma' \star$ is one of the two vertices incident to the dagger, and for $n \geq 0$
$$d(\star, f(\gamma^n \diamond)) \in \left \{n |\zeta(\gamma)|, n |\zeta(\gamma)| +1 \right \}.$$

\noindent {\it{Case 1:}} $\gamma^{n} \diamond = \diamond$ for some $n$. In this case there is nothing to prove. 

This case is equivalent to the $\langle \gamma \rangle$-orbit of $\diamond$ being bounded. Thus in the two cases that follow we can find a sequence of points along the orbit $\gamma^{n_i} \diamond$ such that the $R$-balls around them are disjoint and thus independently distributed. They are not identically distributed though, which is exactly the reason behind the separation into two cases.

\noindent {\it{Case 2:}} $\zeta(\gamma) = 0$ but the $\langle \gamma \rangle$-orbit of $\diamond$ is unbounded. In this case $f(\gamma^n \diamond)$ is always one of the two vertices incident to the dagger in $\Sch(\Gamma,\Delta_1,S)$; a fact that imposes a lot of restrictions on the possible shape of the balls $B_X(\gamma^{n_i} \diamond,R)$. Still, the specific shape of $B_X(\diamond,R)$ is definitely a legal one and thus bound to re-occur infinitely often.

\noindent {\it{Case 3:}} $\zeta(\gamma)\ne 0$. Here the balls 
$B_X(\gamma^{n_i} \diamond,R)$ can be chosen in such a way that their images under $f$ are all disjoint. Thus, for $i>0$, $f(B_X(\gamma^{n_i} \diamond,R))$ never touches the dagger, and the distribution of such balls is much more free. In particular, the specific shape of $B_X( \diamond,R)$ is still legal and hence bound to reoccur infinitely often. 
This completes the proof.

\begin{figure}[ht] \label{fig:boom_image}
\begin{tikzcd} [sep = .5 cm]
& & & & & & & \RS{lrIlir} \arrow [dl,"b"',bend left]  & & & & &  \\
& & & & & \bullet \arrow [r,"a"] & \bullet \arrow [r,"a"'] \arrow [ur,bend left]  & \bullet & & & & & \\
& & & & & & & & & & & & \\
& & & \bullet & \RS{lrIlir} \arrow [d,"b"',bend left] & & & \RS{lrIlir} \arrow [dl,"b"',bend left] & \RS{lrIlir} \arrow [d,"b"',bend left] & \bullet & & & \\
& &  \bullet \arrow [r,"a"] & \bullet \arrow [r,"a"]  \arrow [u,"c"]  & \bullet \arrow [u,bend left] & \bullet \arrow [r,"a"] & \bullet \arrow [r,"a"']  \arrow [uuu,"c"] \arrow [ur,bend left]  & \bullet & \bullet \arrow [u,bend left] \arrow [r,"a"] & \bullet \arrow [r,"a"] \arrow [u,"c"] & \bullet & & \\
& & & & & & & & & & & & \\
& & & & & & & \RS{lrIlir} \arrow [dl,"b"',bend left] & & & & & \\
\bullet \arrow [rrr,"a"] & & & \bullet \arrow [rrr,"a"]  \arrow [uuu,"c"] & & & \diamond \arrow [rrr,"a"] \arrow [uuu,"c"] \arrow [ur,bend left] & & & \bullet \arrow [rrr,"a"] \arrow [uuu,"c"] & & & \bullet \\
& & & & & & & & & & & & \\
& & & & \RS{lrIlir} \arrow [d,"b"',bend left] & & & \RS{lrIlir} \arrow [dl,"b"',bend left] & \RS{lrIlir} \arrow [d,"b"',bend left] & & & & \\
& &  \bullet \arrow [r,"a"] & \bullet \arrow [r,"a"]  \arrow [uuu,"c"]  & \bullet \arrow [u,bend left] & \bullet \arrow [r,"a"] & \bullet \arrow [r,"a"']  \arrow [uuu,"c"] \arrow [ur,bend left]  & \bullet & \bullet \arrow [u,bend left] \arrow [r,"a"]  & \bullet \arrow [r,"a"] \arrow[uuu,"c"] & \bullet & & \\
& & & \bullet \arrow[u,"c"] & & &  \bullet \arrow [u,"c"] \arrow[r,"b"', bend left]  & \RS{lrIlir} \arrow[l,bend left] & & \bullet \arrow[u,"c"]  & & & \\
& & & & & & \phantom{a} \arrow [ddd,"f"] & & & & & & \\
& & & & & &  & & & & & & \\
& & & & & & & & & & & & \\
& & & & & & \phantom{A} & & & & & & \\
    \cdots\ar[rr,"a"]& & 
    \bullet\ar[rr,"a"]\ar[loop,"b",->, in=30,out=150,looseness=12] \ar[loop,"c",->, in=60,out=120,looseness=6]& & 
    \bullet\ar[rr,"a"]\ar[loop,"b",->, in=30,out=150,looseness=12] \ar[loop,"c",->, in=60,out=120,looseness=6]& & 
    \bullet\ar[rr,"a"]\ar[d,"b" out=-120,bend left] \ar[loop,"c",->, in=60,out=120,looseness=6]& &
    \bullet\ar[rr,"a"]\ar[loop,"b",->, in=30,out=150,looseness=12] \ar[loop,"c",->, in=60,out=120,looseness=6]& &
    \bullet\ar[rr,"a"]\ar[loop,"b",->, in=30,out=150,looseness=12] \ar[loop,"c",->, in=60,out=120,looseness=6]& &
    \cdots 
    \\
    \cdots\ar[rr,"a"]& &
    \bullet\ar[rr,"a"]\ar[loop,"b",->, in=-30,out=-150,looseness=12] \ar[loop,"c",->, in=-60,out=-120,looseness=6]& & 
    \bullet\ar[rr,"a"]\ar[loop,"b",->, in=-30,out=-150,looseness=12] \ar[loop,"c",->, in=-60,out=-120,looseness=6]& &
    \star \ar[rr,"a"]\ar[u,"b" out=-120,bend left] \ar[loop,"c",->, in=-60,out=-120,looseness=6]& &
    \bullet\ar[rr,"a"]\ar[loop,"b",->, in=-30,out=-150,looseness=12] \ar[loop,"c",->, in=-60,out=-120,looseness=6]& &
    \bullet\ar[rr,"a"]\ar[loop,"b",->, in=-30,out=-150,looseness=12] \ar[loop,"c",->, in=-60,out=-120,looseness=6]& & 
    \cdots 
\end{tikzcd}
\caption{Parts of the Schreier graphs of $\Delta$ and $\Delta_2$}
\end{figure}
\end{example}

\subsection*{Basic properties of boomerang subgroups} \label{sec:preliminaries}
In this section we establish some basic properties of boomerang subgroups. We will often use the following without explicitly referring to it.

\begin{lemma}\label{lem:commutator_in_boom}
    Let $\Delta \in \BoomlV(\Gamma)$, $\delta \in \Delta$, $\gamma \in \Gamma$ and $n \in \N$. Then, for infinitely many $k>0$ we have $\gamma^k \delta \gamma^{-k} \in \Delta$ and $[\gamma^k,\delta^n] \in \Delta$.
\end{lemma}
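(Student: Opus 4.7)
The plan is to unpack the definitions directly. The key observation is that $\Env(\{\delta\}) = \{\Lambda \in \Sub(\Gamma) \mid \delta \in \Lambda\}$ is a basic open neighborhood of $\Delta$ (since $\delta \in \Delta$), and that the boomerang property applies to \emph{every} element of $\Gamma$, in particular to $\gamma^{-1}$.

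First I would note that since $\Delta$ is a boomerang, $\gamma^{-1}$ is a recurrent direction for $\Delta$; hence there is an increasing sequence $\{n_i\}$ with $\gamma^{-n_i} \Delta \gamma^{n_i} \to \Delta$ in the Chabauty topology. Since $\Env(\{\delta\})$ is an open neighborhood of $\Delta$, for all large enough $i$ we have $\gamma^{-n_i} \Delta \gamma^{n_i} \in \Env(\{\delta\})$, i.e.\ $\delta \in \gamma^{-n_i} \Delta \gamma^{n_i}$, which rearranges to $\gamma^{n_i} \delta \gamma^{-n_i} \in \Delta$. Setting $k = n_i$ this gives infinitely many positive $k$ with $\gamma^k \delta \gamma^{-k} \in \Delta$.

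For the second assertion I would just multiply inside $\Delta$: for such $k$ one has both $\delta \in \Delta$ and $\gamma^k \delta \gamma^{-k} \in \Delta$, hence
\[
[\delta,\gamma^k] = \delta\,\gamma^k \delta^{-1} \gamma^{-k} = \delta \cdot (\gamma^k \delta \gamma^{-k})^{-1} \in \Delta.
\]

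There is really no obstacle here; the lemma is essentially a restatement of the definition together with the elementary closure of $\Delta$ under products and inverses. The only mild subtlety worth flagging is the need to apply recurrence to $\gamma^{-1}$ rather than $\gamma$ in order to land on the conjugation convention $\gamma^k \delta \gamma^{-k}$ stated in the lemma.
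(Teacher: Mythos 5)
Your proof is correct and takes essentially the same route as the paper's: both apply recurrence in the direction $\gamma^{-1}$, observe that $\Env(\{\delta\})$ is a basic open neighborhood of $\Delta$, and then obtain the commutator statement from closure of $\Delta$ under products and inverses. (If anything, your final line is slightly tidier than the paper's, which proves the equivalent statement for $[\delta^{-1},\gamma^k]$.)
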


\begin{proof}
Since $\gamma^{-1}$ is an lV-recurrent direction, 
there are infinitely many $k > 0$ such that
$\gamma^{-k} \Delta \gamma^k \in \Env(\delta)$,
i.e.  there is a $\delta_k \in \Delta$ with $\gamma^{-k} \delta_k \gamma^{k} = \delta$.
But then $\gamma^k \delta \gamma^{-k} = \delta_k \in \Delta$
and consequently
$[\gamma^k,\delta^n] = \delta_k^n \delta^{-n} \in \Delta$.
\end{proof}

\begin{lemma}\label{lem:rec_intersection_subgroup}
Let $\Gamma_2$ be a countable discrete group and $\Gamma_1 < \Gamma_2$ be a subgroup. 
Let $\Delta < \Gamma_2$ be a subgroup. 
Denote by $L$ the normalizer of $\Gamma_1$ in $\Gamma_2$; so $\Gamma_1 \cap \Delta \in \Sub(L)$.
Then $\rd_{L}(\Delta \cap \Gamma_1) \supseteq \rd_{\Gamma_2}(\Delta) \cap L$. In particular,
if $\Delta < \Gamma_2$ is a boomerang subgroup and $N \lhd \Gamma_2$, then 
\begin{enumerate}
    \item $\Delta \cap \Gamma_1$ is a boomerang subgroup of $\Gamma_1$, and
    \item $\Delta \cap N$ is a boomerang subgroup of $\Gamma_2$.
\end{enumerate}
A similar statement holds with $\rd$ replaced by $\rd^{lV}$. Consequently similar corollaries to (1),(2) above hold for lV-boomerangs.
\end{lemma}

\begin{proof}
Let $\gamma \in L$.
Assume $\lim_{i \to \infty} \gamma^{n_i} \Delta \gamma^{-n_i} = \Delta$.
Since intersecting with $\Gamma_1$ is a continuous map $\Sub(\Gamma_2) \to \Sub(\Gamma_1)$, we get
$$\lim_{i \to \infty} (\gamma^{n_i} (\Delta \cap \Gamma_1) \gamma^{-n_i}) =
(\lim_{i \to \infty} \gamma^{n_i} \Delta \gamma^{-n_i}) \cap \Gamma_1 =
\Delta \cap \Gamma_1.$$
\end{proof}

\begin{lemma}\label{lem:rec_in_fi}
    Let $\Gamma_1 < \Gamma_2$ be countable discrete groups and $\Delta < \Gamma_1$ a boomerang. Assume that for every $\gamma \in \Gamma_2$ there exists $k>0$ with $\gamma^k \in \Gamma_1$.
    Then $\Delta < \Gamma_2$ is a boomerang.
    In particular, this is true if $[\Gamma_2 : \Gamma_1]<\infty$. A similar statement holds for lV-boomerangs.
\end{lemma}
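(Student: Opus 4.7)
The plan is, given an arbitrary $\gamma \in \Gamma_2$, to replace it by a suitable power lying inside $\Gamma_1$ and then invoke the boomerang property of $\Delta$ in $\Gamma_1$. So, fix $\gamma \in \Gamma_2$ and choose $k \geq 1$ with $\gamma^k \in \Gamma_1$, using the hypothesis. Since $\Delta$ is a boomerang in $\Gamma_1$, the element $\gamma^k$ is a recurrent direction for $\Delta$ in $\Sub(\Gamma_1)$, hence there is an increasing sequence $m_i \to \infty$ with $(\gamma^k)^{m_i} \Delta (\gamma^k)^{-m_i} \to \Delta$ in $\Sub(\Gamma_1)$. The obvious candidate witnessing the desired recurrence for $\gamma$ in $\Sub(\Gamma_2)$ is the increasing positive sequence $n_i := k m_i$.

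The single point requiring verification is that Chabauty convergence in $\Sub(\Gamma_1)$ implies Chabauty convergence in $\Sub(\Gamma_2)$ for sequences whose terms, together with the limit, all sit in $\Sub(\Gamma_1)$. This is immediate from the basic sets of the topology: any basic neighbourhood $\Env(F) \cap \Miss(F')$ of $\Delta$ in $\Sub(\Gamma_2)$ automatically satisfies $F \subset \Delta \subset \Gamma_1$, while elements of $F' \setminus \Gamma_1$ impose no condition on subgroups of $\Gamma_1$. Thus $\Env(F) \cap \Miss(F' \cap \Gamma_1)$ is a basic neighbourhood of $\Delta$ in $\Sub(\Gamma_1)$ whose subgroups of $\Gamma_1$ coincide exactly with those lying in the original neighbourhood. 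Consequently $\gamma^{n_i}\Delta\gamma^{-n_i} \to \Delta$ also in $\Sub(\Gamma_2)$, as required.

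For the ``in particular'' statement, when $[\Gamma_2:\Gamma_1] < \infty$ each $\gamma \in \Gamma_2$ permutes the finite left coset space $\Gamma_2/\Gamma_1$ with finite order, so some positive power $\gamma^k$ fixes the coset $\Gamma_1$, i.e.\ $\gamma^k \in \Gamma_1$. I do not expect any real obstacle in this argument beyond the routine topological compatibility check described above.
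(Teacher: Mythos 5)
Your proof is correct and follows the same route the paper does (pass from $\gamma$ to a power $\gamma^k \in \Gamma_1$ and use recurrence of $\Delta$ in $\Gamma_1$), with the difference that you carefully spell out the compatibility of the Chabauty topologies on $\Sub(\Gamma_1)$ and $\Sub(\Gamma_2)$, which the paper dismisses as obvious. The verification via $\Env(F)\cap\Miss(F')$ is sound, and the coset-permutation argument for the ``in particular'' clause is correct.
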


\begin{proof}
    It is obvious that if $\gamma^k$ is a recurrent direction, then also $\gamma$ is.
\end{proof}

\begin{lemma}\label{lem:commensurator}
    Let $\Gamma < G$ be a countable discrete subgroup and assume $g \in G$ commensurates $\Gamma$. Let $\Delta < \Gamma$ be a boomerang subgroup.
    Then, $g \Delta g^{-1} \cap \Gamma < \Gamma$ is a boomerang. A similar statement holds for lV-boomerangs.
\end{lemma}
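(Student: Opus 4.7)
The plan is to leverage the two preceding lemmas after translating the problem through conjugation by $g$, exploiting the hypothesis that $g$ commensurates $\Gamma$. Set $H := \Gamma \cap g\Gamma g^{-1}$. The commensuration hypothesis says precisely that $[\Gamma : H] < \infty$ and $[g\Gamma g^{-1} : H] < \infty$.

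First I would observe that conjugation by $g$ is a homeomorphism of $\mathrm{Sub}(G)$ (where $G$ is viewed as an abstract group, so the Chabauty topology on $\mathrm{Sub}(\Gamma)$ or $\mathrm{Sub}(g\Gamma g^{-1})$ is defined intrinsically), and it intertwines the $\Gamma$-conjugation action on $\mathrm{Sub}(\Gamma)$ with the $g\Gamma g^{-1}$-conjugation action on $\mathrm{Sub}(g\Gamma g^{-1})$. Hence $g\Delta g^{-1}$ is a boomerang subgroup of $g\Gamma g^{-1}$.

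Next, since $g\Delta g^{-1} \subseteq g\Gamma g^{-1}$, we have the identity
$$g\Delta g^{-1} \cap \Gamma \;=\; g\Delta g^{-1} \cap H.$$
Applying Lemma \ref{lem:rec_intersection_subgroup} with the ambient group $g\Gamma g^{-1}$ and the subgroup $H$, the boomerang $g\Delta g^{-1}$ of $g\Gamma g^{-1}$ restricts to a boomerang subgroup $g\Delta g^{-1}\cap H$ of $H$. Finally, because $[\Gamma : H] < \infty$, Lemma \ref{lem:rec_in_fi} promotes this boomerang from $H$ to all of $\Gamma$, yielding the desired conclusion.

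I do not anticipate any serious obstacle; the proof is simply a matter of assembling the two previous lemmas in the right order, with the commensuration hypothesis providing exactly the finite-index intermediate subgroup $H$ needed to bridge between $\Gamma$ and $g\Gamma g^{-1}$.
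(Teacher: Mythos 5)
Your proof is correct and follows exactly the same route as the paper's: conjugation by $g$ transports the boomerang property to $g\Delta g^{-1} < g\Gamma g^{-1}$, Lemma~\ref{lem:rec_intersection_subgroup} restricts it to the finite-index intermediate group $H = \Gamma \cap g\Gamma g^{-1}$, and Lemma~\ref{lem:rec_in_fi} promotes it to $\Gamma$. You are merely a bit more explicit about the conjugation step and the identity $g\Delta g^{-1}\cap\Gamma = g\Delta g^{-1}\cap H$, which the paper leaves implicit.
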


\begin{proof}
    Since $g \Delta g^{-1} < g \Gamma g^{-1}$ is a boomerang, 
    also $g \Delta g^{-1} \cap \Gamma < g \Gamma g^{-1} \cap \Gamma$ is a boomerang by Lemma \ref{lem:rec_intersection_subgroup}.
    Now we are done by Lemma \ref{lem:rec_in_fi}.
\end{proof}

\begin{lemma} \label{lem:surj_image} Let $\varphi\colon \Gamma \rightarrow \Sigma$ be a surjective homomorphism between two countable groups. Then $\varphi(\Delta) \in \BoomlV(\Sigma)$ whenever $\Delta \in \BoomlV(\Gamma)$. 
\end{lemma}
\begin{proof}
Let $S \subset \varphi(\Delta)$ be a finite subset and $\sigma \in \Sigma$. Choose liftings $\tilde{\sigma} \in \varphi^{-1}(\{\sigma\})$ and $\tilde{S} \subset \Delta$ with $\tilde{S}$ finite and $\phi(\tilde{S})=S$. Now using the lV-boomerang condition we can find $n \in \N$ such that $\tilde{S} \subset \tilde{\sigma}^{n} \Delta \tilde{\sigma}^{-n}$, and hitting this equation with $\varphi$ gives us the desired corollary.  
\end{proof}

\begin{lemma}
The set $\Boom(\Gamma) < \Sub(\Gamma)$ is a $G_{\delta}$-set.
In particular, it is measurable.
\end{lemma}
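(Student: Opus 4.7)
The plan is to reduce to a classical dynamical fact: the set of recurrent points of a homeomorphism of a compact metrizable space is always $G_\delta$. For each $\gamma \in \Gamma$, let $T_\gamma \colon \Sub(\Gamma) \to \Sub(\Gamma)$ denote the conjugation homeomorphism $\Delta \mapsto \gamma \Delta \gamma^{-1}$, and set
$$\Rec_\gamma := \{\Delta \in \Sub(\Gamma) \mid \gamma \in \rd_\Gamma(\Delta)\} = \{\Delta \in \Sub(\Gamma) \mid \Delta \text{ is recurrent under } T_\gamma\}.$$
By the definition of boomerang subgroup, $\Boom(\Gamma) = \bigcap_{\gamma \in \Gamma} \Rec_\gamma$, which is a countable intersection since $\Gamma$ is countable. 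Hence it suffices to show that each $\Rec_\gamma$ is $G_\delta$.

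For the dynamical fact, enumerate as $\{U_k\}_{k \in \N}$ the countable basis of $\Sub(\Gamma)$ consisting of the clopen sets $\Env(F_1) \cap \Miss(F_2)$ with $F_1, F_2 \subset \Gamma$ finite. Then $\Delta$ is recurrent under $T_\gamma$ exactly when, for every pair $(k,m) \in \N^2$,
$$\Delta \in R_\gamma(k,m) := (\Sub(\Gamma) \setminus U_k) \cup \bigcup_{n \geq m} T_\gamma^{-n}(U_k).$$
Since each $U_k$ is clopen and $T_\gamma$ is a homeomorphism, each set $R_\gamma(k,m)$ is open. Therefore $\Rec_\gamma = \bigcap_{k,m} R_\gamma(k,m)$ is $G_\delta$, and assembling over $\gamma$,
$$\Boom(\Gamma) = \bigcap_{\gamma \in \Gamma} \bigcap_{(k,m) \in \N^2} R_\gamma(k,m)$$
is a countable intersection of open sets, hence $G_\delta$; every $G_\delta$ set is Borel, so measurability is automatic.

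The only subtle point is the equivalence between the recurrence definition recalled in the excerpt (``for every neighbourhood $O$ of $\Delta$ there exists $n \geq 1$ with $T_\gamma^n \Delta \in O$'') and the arbitrarily-late-return formulation used in the description of $R_\gamma(k,m)$. This equivalence is already asserted parenthetically in the definition, and it follows from a one-line pigeonhole argument along a nested basis shrinking to $\Delta$: if the return times were bounded, then some power $T_\gamma^k$ would fix $\Delta$, in which case $\Delta$ returns at every multiple of $k$. No substantive obstacle arises; the lemma is a routine verification that the quantifiers defining $\Boom(\Gamma)$ assemble into a countable intersection of opens.
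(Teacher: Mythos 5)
Your proof is correct and takes essentially the same approach as the paper: express $\Boom(\Gamma)$ as a countable intersection indexed by group elements and basic neighbourhoods of $\Sub(\Gamma)$. The one cosmetic difference is that you use the clopen basis of $\Env$/$\Miss$ sets so that each $R_\gamma(k,m)$ is genuinely open, whereas the paper works with an arbitrary countable basis and invokes the fact that closed subsets of a metric space are $G_\delta$; your ``arbitrarily-late-return'' reformulation, with the extra index $m$, is a harmless variant that you correctly note is equivalent to single recurrence.
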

\begin{proof}
As a compact metric space $\Sub(\Gamma)$ is separable with a countable basis for the topology $\{U_1,U_2,\ldots\}$. 
Set $C_n := \Sub(\Gamma) \setminus U_n$.
For every $n \in \N$ and $\gamma \in \Gamma$ the collection 
$$W(n,\gamma) := C_n \cup \left(\bigcup_{m \in \N} \{\Delta \in U_{n} \ | \ \gamma^m \Delta \gamma^{-m} \in U_{n}\}\right)$$
is $G_{\delta}$ as the union of a closed and an open subset in a metric space. And hence $\Boom(\Gamma)$ is $G_{\delta}$ as a countable intersection of such sets: 
$$\Boom(\Gamma) = \bigcap_{\substack{\gamma \in \Gamma, \\ n \in \N}} W(n,\gamma) .$$
\end{proof}

\begin{remark}
$\BoomlV(\Gamma)$ is also a $G_{\delta}$-set in the Chabauty topology. However, it is not a $G_{\delta}$-set in the lower Vietoris topology. The proof above works similar. In fact, if we take as a basis for the lower Vietoris topology sets of the from $\{\Env(F) \ | \ F \subset \Gamma, \ \abs{F}<\infty\}$, it is easy to see that the sets $W(n,\gamma)$ in the above proof are actually open in the Chabauty topology and hence their intersection is $G_{\delta}$. 
\end{remark}

Recall that a countable group is called LERF if every finitely generated subgroup is closed in the profinite topology, i.e. it is  an intersection of finite index subgroups. It was observed in  \cite[Thm. 3.1]{GKM:isolated} that a countable group is LERF if and only if the collection of finite index subgroups is dense. From this the following corollary immediately follows:
\begin{corollary}\label{cor:lerf}
    Let $\Gamma$ be a LERF group. Then $\Boom(\Gamma) \subset \Sub(\Gamma)$ is residual. 
\end{corollary}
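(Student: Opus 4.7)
The preceding lemma established that $\Boom(\Gamma)$ is a $G_\delta$ subset of $\Sub(\Gamma)$, so by definition of residual it suffices to show that $\Boom(\Gamma)$ is dense in $\Sub(\Gamma)$. The plan is to exhibit, in every nonempty basic open set, a finite index subgroup of $\Gamma$, which is a boomerang by item (\ref{itm:nfi}) of the examples.

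Recall that the sets $\Env(F) \cap \Miss(F')$ with $F,F' \subset \Gamma$ finite form a basis for the Chabauty topology, so I would fix one such nonempty basic open set $U = \Env(F) \cap \Miss(F')$. Nonemptiness forces $\langle F\rangle \cap F' = \emptyset$, since any $\Delta \in U$ contains $\langle F\rangle$ and is disjoint from $F'$. The subgroup $\langle F\rangle$ is finitely generated, so the LERF assumption applies: for each $f \in F'$, since $f \notin \langle F\rangle$, there is a finite index subgroup $H_f \leq \Gamma$ with $\langle F\rangle \subseteq H_f$ and $f \notin H_f$.

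Set $H := \bigcap_{f \in F'} H_f$. As a finite intersection of finite index subgroups, $H$ is itself of finite index in $\Gamma$. By construction $F \subseteq \langle F\rangle \subseteq H$, so $H \in \Env(F)$, and each $f \in F'$ is excluded from $H_f$ and hence from $H$, so $H \in \Miss(F')$. Thus $H \in U$. By item (\ref{itm:nfi}) of the examples, $H$ is a boomerang, so $\Boom(\Gamma) \cap U \neq \emptyset$. Since $U$ was an arbitrary nonempty basic open set, $\Boom(\Gamma)$ is dense, and being a dense $G_\delta$ it is residual.

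There is no real obstacle here; the only thing one must be careful about is to correctly translate nonemptiness of $\Env(F) \cap \Miss(F')$ into the separation condition $\langle F\rangle \cap F' = \emptyset$ that is needed to invoke LERF.
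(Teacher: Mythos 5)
Your proof is correct and follows the same strategy as the paper: $\Boom(\Gamma)$ is $G_\delta$ by the preceding lemma, finite index subgroups are boomerangs, and a countable group is LERF exactly when the finite index subgroups are dense in $\Sub(\Gamma)$. The only difference is that you reprove the relevant direction of that last equivalence from scratch (a short, self-contained argument via $\Env(F)\cap\Miss(F')$), whereas the paper simply cites \cite[Theorem 3.1]{GKM:isolated} for it.
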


Boomerang subgroups can be expected to look quite complicated. The following lemma demonstrates that finitely generated subgroups can be lV-boomerangs only for rather trivial reasons. Note again the resemblance to IRSs, since a finitely generated, ergodic IRS necessarily has a normalizer of finite index.

\begin{lemma}\label{lem:fg_recurrent}
    Let $\Gamma$ be a countable discrete group and $\Delta < \Gamma$ a finitely generated subgroup. Then $\Delta$ is an lV-boomerang subgroup if and only if its normalizer $N_{\Gamma}(\Delta)$ contains a positive power of every element of $\Gamma$.
\end{lemma}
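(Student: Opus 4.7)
The reverse direction is immediate: if some power $\gamma^k$ lies in $N_\Gamma(\Delta)$, then $\gamma^{nk}\Delta\gamma^{-nk}=\Delta$ for every $n$, so the constant subsequence witnesses recurrence along $\gamma$. The content is in the forward direction, so suppose $\Delta = \langle \delta_1,\dots,\delta_m\rangle$ is a finitely generated boomerang and fix $\gamma \in \Gamma$. The plan is to produce two infinite ``one-sided inclusion'' sets of exponents and then combine them using a closure-under-multiples property.

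For each generator $\delta_j$, Lemma \ref{lem:commutator_in_boom} applied to $\gamma$ yields infinitely many $k>0$ with $\gamma^k\delta_j\gamma^{-k}\in\Delta$; by finite pigeonhole over $j=1,\dots,m$, there are infinitely many $k$ for which this holds simultaneously for all generators, and since conjugation is a group homomorphism this means $\gamma^k\Delta\gamma^{-k}\subseteq\Delta$. Call this infinite set of exponents $K_+$. Applying the same lemma to $\gamma^{-1}$ (and again pigeonholing over the finite set of generators) produces an infinite set $K_-$ of $k>0$ with $\gamma^{-k}\Delta\gamma^{k}\subseteq\Delta$, i.e.\ $\Delta\subseteq\gamma^{k}\Delta\gamma^{-k}$. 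At this stage $K_+$ and $K_-$ need not intersect, which is the only subtle point.

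The key observation is that both $K_+$ and $K_-$ are closed under multiplication by positive integers. Indeed, if $k\in K_+$, then applying the homomorphism $x\mapsto\gamma^{k}x\gamma^{-k}$ to the inclusion $\gamma^{k}\Delta\gamma^{-k}\subseteq\Delta$ gives $\gamma^{2k}\Delta\gamma^{-2k}\subseteq\gamma^{k}\Delta\gamma^{-k}\subseteq\Delta$, and by induction $nk\in K_+$ for every $n\geq 1$. The symmetric argument, starting from $\Delta\subseteq\gamma^{k}\Delta\gamma^{-k}$, shows $K_-$ is also closed under positive multiples. Since $K_+$ and $K_-$ are nonempty, picking $k_+\in K_+$ and $k_-\in K_-$, the product $k:=k_+k_-$ lies in $K_+\cap K_-$, whence $\gamma^{k}\Delta\gamma^{-k}=\Delta$ and $\gamma^{k}\in N_\Gamma(\Delta)$, as required.

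I expect no serious obstacle: the only potentially tricky step is recognizing that the two one-sided inclusions from Lemma \ref{lem:commutator_in_boom} need not occur at the same exponent, which is resolved by the semigroup-closure trick above. The finite generation hypothesis is used only to turn pointwise recurrence of the finitely many $\delta_j$ into setwise recurrence of $\Delta$ via pigeonhole.
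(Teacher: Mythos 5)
Your overall structure is sound and matches the paper's — extract two one-sided inclusions and combine them at the product exponent $nm$ (your $k_+k_-$) — but there is a genuine gap in how you produce the first inclusion $\gamma^k\Delta\gamma^{-k}\subseteq\Delta$. You apply Lemma \ref{lem:commutator_in_boom} separately to each generator $\delta_j$ to obtain, for each $j$, an infinite set $K_j\subset\N$ of exponents with $\gamma^k\delta_j\gamma^{-k}\in\Delta$, and then assert that ``by finite pigeonhole over $j=1,\dots,m$'' there are infinitely many $k$ lying in $\bigcap_j K_j$. This is a quantifier error, not a pigeonhole argument: finitely many infinite subsets of $\N$ can have empty intersection (take the $m$ residue classes mod $m$), and pigeonhole gives no control over intersections. ``For each $j$, infinitely many $k$ with $P(j,k)$'' does not imply ``infinitely many $k$ with $P(j,k)$ for all $j$.''

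The correct step — which is what the paper does — bypasses Lemma \ref{lem:commutator_in_boom} entirely. Since $\Delta$ is a recurrent point of $(\Sub(\Gamma),\gamma^{-1})$ and $\Env(S)$ is an open neighbourhood of $\Delta$ for the finite generating set $S=\{\delta_1,\dots,\delta_m\}$, there exists a single $n\geq 1$ with $\gamma^{-n}\Delta\gamma^n\in\Env(S)$, i.e.\ $S\subset\gamma^{-n}\Delta\gamma^n$, hence $\gamma^n\Delta\gamma^{-n}\subseteq\Delta$. Finite generation is used precisely to make $\Env(S)$ a basic open neighbourhood so that one exponent handles all generators simultaneously; it is not there to enable a pigeonhole afterward. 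Once this step is repaired, your observation that $K_+$ and $K_-$ are closed under positive integer multiples is correct and gives exactly the paper's conclusion at the exponent $nm$. Your reverse direction is fine.
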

\begin{proof}
    Let $S$ be a finite generating set for $\Delta$.
    Let $\gamma \in \Gamma$.
    By the lV-boomerang condition there exists $n \geq 1$ such that $\gamma^n S \gamma^{-n} \subset \Delta$, i.e. $\gamma^n \Delta \gamma^{-n} < \Delta$.
    But also $\gamma^{-1}$ is a lV-recurrent direction, so by the same argument,
    there exists $m \geq 1$ with $\gamma^{-m} \Delta \gamma^m < \Delta$.
    But then $\gamma^{nm} \Delta \gamma^{-nm} < \Delta < \gamma^{nm} \Delta \gamma^{-nm}$ and we conclude that $\gamma^{nm} \in N_\Gamma(\Delta)$.
\end{proof}

The following lemma is very useful for proving that lV-boomerang subgroups cannot fix points in various situations.
\begin{lemma}\label{lem:rec_subgroup_no_fixed_point}
    Let $\Gamma$ be a countable discrete group acting continuously on a locally compact, Hausdorff space $X$. Let $\Delta < \Gamma$ be a subgroup and let $x,y \in X$ be such that $\Delta x = x$ and $\Delta y \neq y$.
    Assume there exists $\gamma \in \Gamma$ such that $\lim_{n \to \infty} \gamma^n x = y$.
    Then $\gamma$ is not an lV-recurrent direction for $\Delta$.
\end{lemma}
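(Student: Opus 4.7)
The plan is to argue by contradiction. Suppose $\gamma$ is a recurrent direction for $\Delta$, and fix an increasing sequence $n_i \to \infty$ with $\gamma^{n_i}\Delta\gamma^{-n_i} \to \Delta$ in the Chabauty topology. The hypothesis $\Delta y \neq y$ lets me pick some $\delta \in \Delta$ with $\delta y \neq y$; this particular $\delta$ will be the witness of the contradiction.

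The first step is to extract an algebraic consequence of the recurrence. Convergence in Chabauty applied to the neighbourhood $\Env(\{\delta\})$ of $\Delta$ gives $\delta \in \gamma^{n_i}\Delta\gamma^{-n_i}$ for all sufficiently large $i$, i.e.\ $\gamma^{-n_i}\delta\gamma^{n_i} \in \Delta$. Since by assumption $\Delta$ fixes $x$, this element must fix $x$, which when rearranged says exactly that
\[
\delta \cdot (\gamma^{n_i} x) \;=\; \gamma^{n_i} x
\]
for all large $i$.

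Next I would pass to the limit as $i \to \infty$. The hypothesis gives $\gamma^{n_i} x \to y$, so by continuity of the action $\delta \cdot \gamma^{n_i} x \to \delta y$; on the other hand, the identity above shows that $\delta \cdot \gamma^{n_i} x = \gamma^{n_i} x \to y$. Since $X$ is Hausdorff limits are unique, so $\delta y = y$, contradicting the choice of $\delta$.

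There is really no substantial obstacle here; the argument is a one-page continuity-and-Chabauty exercise. The only point worth flagging is that the whole derivation used only the $\Env(\{\delta\})$-type neighbourhood of $\Delta$, which means the lemma is in fact valid for inner boomerangs, confirming the comment in the Remark on inner/outer boomerangs.
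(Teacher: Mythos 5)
Your proof is correct and is essentially the paper's argument in contrapositive form: both hinge on picking $\delta \in \Delta$ moving $y$, noting $\gamma^n\Delta\gamma^{-n}$ fixes $\gamma^n x$, and using $\gamma^n x \to y$ together with Hausdorffness; the paper constructs an explicit separating neighbourhood $O$ of $y$ to show $\gamma^n\Delta\gamma^{-n}$ eventually leaves $\Env(\delta)$, while you pass to the limit directly and invoke uniqueness of limits, a purely cosmetic difference. Your closing observation that the argument only uses $\Env$-type neighbourhoods matches the paper's Remark on inner boomerangs.
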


\begin{proof}
     Take $\delta \in \Delta$ with $\delta y \neq y$.
     Let $O$ be an open neighbourhood of $y$ such that $O \cap \delta O = \emptyset$.
     Note that $\gamma^n \Delta \gamma^{-n}$ fixes $\gamma^n x$ for every $n \geq 1$.
     If $\gamma^n x \in O$ clearly $\delta \gamma^n x \neq \gamma^n x$ and thus
      $\gamma^n \Delta \gamma^{-n} \notin \Env(\delta)$.
     Therefore, $\gamma \notin \rd_{\Gamma}^{lV}(\Delta)$.
\end{proof}

\begin{remark}\label{rem:obit converging to non-fixed point}
The proof more precisely shows the following. Suppose we have
an open set $O \subset X$
and an element $\delta \in \Delta$ with $\delta O \cap O = \emptyset$,
an element $\gamma \in \Gamma$ and a sequence $n_i \to \infty$,
and an $x \in X$ such that $\gamma^{n_i}(\Delta x) \subset O$ for all $i$,
then $\gamma^{n_i} \Delta \gamma^{-n_i}$ does not converge to $\Delta$ in the lower Vietoris topology.
\end{remark}

\subsubsection*{Convergence groups}
We now use this lemma to prove Theorem \ref{thm:convergence}. Before that, let us recall a few facts about convergence groups. These were first introduced by Furstenberg \cite{Furstenberg:conv}, under the name of Dynkin groups. Gehring and Martin \cite{GM:conv}, who independently introduced this concept as part of their study of Kleinian groups via their action on the boundary of hyperbolic space, called them ``convergence groups''. The family of non-elementary convergence groups is wide and encompasses Gromov hyperbolic groups, relatively hyperbolic groups, mapping class groups of hyperbolic surfaces and the groups $\Aut(F_n)$ and $\Out(F_n)$. 

An action $\Gamma \curvearrowright X$ on a compact Hausdorff space is called a convergence action if for every infinite sequence of distinct elements $(\gamma _{n})\in \Gamma$ there exists a subsequence $(\gamma _{n_{k}})$ and points $a,r \in X$ such that $(\gamma _{n_{k}}|_{X \setminus \{r\}})$ converges uniformly on compact subsets to $a$. Such an action is called elementary if either $\Gamma$ is finite, or if the action admits orbits of size $\le 2$. Finally a {\it{non-elementary convergence group}} is any group admitting such an action. 

A direct consequence of the definition is the classification of the elements of a non-elementary convergence group into the three regular categories. Elements of finite order are referred to as {\it{elliptic}}. With every element of infinite order $g \in \Gamma$ the definition associates an attracting point $a_g$ and a repelling point $r_g$; the element is called {\it{parabolic}} if $a_g=r_g$ and {\it{hyperbolic otherwise}}. 

A non-elementary convergence action need not be minimal but it always contains a unique closed minimal $\Gamma$-invariant subset known as the {\it{limit set}} $\Lc \subset X$. 
Choose a base point $x \in X$, then
this is the set of all possible limit points of the form $\Lc = \{y \mid y= \lim_{n \rightarrow \infty} \gamma_n x\}$ with $(\gamma_n)$ ranging over all sequences in $\Gamma$ that are not eventually constant. The limit set is independent of the choice of basepoint $x$. The action $\Gamma \curvearrowright \Lc X$ is a non-elementary, minimal convergence action.

\begin{proof}[Proof of Theorem \ref{thm:convergence}]
Without loss of generality we may assume that $X = \Lc \Gamma$.
%
Assume by contradiction that $\Lc \Delta \neq X$,
i.e. we can choose $x \in X$ such that $\Delta x$ is not dense in $X$.
Assume also by contradiction that the action of $\Delta$ on $X$ is non-trivial, i.e. there is $\emptyset \neq O_1 \subset X$ open and $\delta \in \Delta$ with $\delta(O_1) \cap O_1 = \emptyset$.
Let $O_2 \subset X \setminus \overline{\Delta x}$ be non-empty and open.
Choose a hyperbolic element $\gamma \in \Gamma$ with attracting point in $O_1$ and repelling point in $O_2$ (see for example  \cite[Lemma 2.1]{Gel:convergence}).
Then $\gamma^n(\Delta x) \subset O_1$ for all large enough $n$, and we get a contradiction by Remark \ref{rem:obit converging to non-fixed point}.
This proves that either $\Lc \Delta = X$ or $\Delta$ acts trivially on $X$ and hence must be finite.
\end{proof}

\section{The case of $\SL_n(\Z)$ with $n \ge 3$.} 
\label{sec:SLn}
This section is dedicated to an elementary proof of our main theorem in the case where $\Gamma$ is a finite index subgroup of $\SL_n(\Z)$. The Margulis normal subgroup theorem as well as the Nevo--Stuck--Zimmer theorem in this case follow as easy corollaries. The proof is inspired by \cite{Venkataramana:87} and \cite{Meiri:gen_pairs}. 

Note first that by Lemma \ref{lem:rec_in_fi} it is enough to prove the theorem when $\Gamma = \SL_n(\Z)$ for some $n \ge 3$. 
Let $B$ be the subgroup of upper triangular matrices, $U < B$ the subgroup of upper triangular matrices with $1$ on the diagonal, $D$ the diagonal matrices and $P$ the set of permutation matrices such that, if necessary, the entry in the second column is $-1$ to ensure that the determinant is $1$. If $\sigma \in \Sym(n)$ then $p_\sigma \in P$ is the corresponding permutation matrix. For $i \ne j$, the elementary matrix with $1$ on the diagonals and $k$ in position $(i,j)$ is denoted $e_{i,j}^k$.

An important ingredient of the proof is the Bruhat decomposition\footnote{For $\SL_n$ this follows directly from Gaussian elimination, or more precisely from the elementary fact that given any $A \in \SL_n(\Q)$ we can always bring it to a monomial matrix (i.e. an element of $DP$) by a applying a sequence of row and column operations of the form $A \mapsto e_{i,j}^k A$ or $A \mapsto A e_{i,j}^k$, where $1 \le i < j \le n$ and $k \in \Q$. For a generic $A$ one never encounters a zero on the anti-diagonal - corresponding to the so called {\it{open Bruhat cell}} $B p_{\omega}B$, which is Zariski open, where $\omega \in \Sym(n)$ is the permutation $\omega(i)=n+1-i$.} 
$$\SL_n(\R) = \bigsqcup_{\sigma \in \Sym(n)} B p_\sigma B = U D P U;$$ which  also works over $\Q$ or any other field. 
We repeatedly use, for $(i,j) \neq (j',i')$ and $k,k' \in \R$, the commutator formula
\[
[e_{i,j}^k,e_{i',j'}^{k'}] =
\begin{cases}
e_{i,j'}^{kk'} & j=i' \\ 
e_{i',j}^{-kk'} & i=j' \\ 
e & \text{else.}
\end{cases}
\]

\begin{lemma}\label{lem:int cell not fixing 1}
    Let $\Delta < \Gamma$ be an lV-boomerang subgroup that is not central.
    Then there exists $\sigma \in \Sym(n)$ with $\sigma(1) \neq 1$
    and $\Delta \cap B p_\sigma B \neq \emptyset$.
\end{lemma}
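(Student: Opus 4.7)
The plan is to argue by contraposition: the union $\bigsqcup_{\sigma(1)=1} Bp_\sigma B$ is precisely the standard maximal parabolic $P_1 = \Stab_{\SL_n(\R)}([e_1])$, so it suffices to show that if $\Delta < \Gamma$ is a boomerang subgroup contained in $P_1$ -- equivalently, fixing the point $[e_1] \in \PP^{n-1}$ -- then $\Delta$ is central. The driving mechanism will be Lemma \ref{lem:rec_subgroup_no_fixed_point} used in contrapositive form: whenever $\Delta$ fixes a point $x \in \PP^{n-1}$ and $\gamma \in \rd_\Gamma(\Delta)$ satisfies $\gamma^k x \to y$, then $\Delta$ must also fix $y$.

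I would propagate fixed points in two waves. First, for $(a_2, \ldots, a_n) \in \Z^{n-1} \setminus \{0\}$ the elementary matrices $e_{i,1}^{a_i}$ with $i \geq 2$ pairwise commute (by the commutator formula: they share column index $j=1$ and have distinct row indices), so $\gamma := \prod_{i\geq 2} e_{i,1}^{a_i}$ satisfies $\gamma^k e_1 = e_1 + k\sum_{i\geq 2} a_i e_i$ for every $k$, hence $\gamma^k [e_1] \to [\sum_{i\geq 2} a_i e_i]$ in $\PP^{n-1}$. Since $\Delta$ is a boomerang, $\gamma$ is a recurrent direction, and the mechanism above forces $\Delta$ to fix $[\sum_{i\geq 2} a_i e_i]$. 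Varying the $a_i$ gives a dense set of fixed points in $\PP(V_1)$ with $V_1 := \Span(e_2,\ldots,e_n)$; since $\Fix(\Delta) \subset \PP^{n-1}$ is closed, $\Delta$ fixes all of $\PP(V_1)$ pointwise.

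In particular $\Delta$ fixes $[e_j]$ for every $j \geq 2$, so I would rerun the argument with base point $[e_j]$: the commuting product $\gamma = \prod_{i \neq j} e_{i,j}^{a_i}$ gives $\gamma^k [e_j] \to [\sum_{i \neq j} a_i e_i]$, yielding that $\Delta$ fixes $\PP(V_j)$ pointwise, where $V_j := \Span(e_i : i \neq j)$. For any $\delta \in \Delta$ this means $\delta|_{V_j}$ is a scalar $\mu_j$; since $n \geq 3$ the intersections $V_j \cap V_k$ are nonzero for $j \neq k$, so all the $\mu_j$ coincide with a single $\mu$. Therefore $\delta = \mu I$, and integer unimodularity forces $\mu = \pm 1$, so $\Delta \subset \{\pm I\}$ is central, contradicting the hypothesis.

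I do not anticipate a serious obstacle: the proof reduces to routine computations with commuting families of elementary matrices combined with the fixed-point propagation principle coming from Lemma \ref{lem:rec_subgroup_no_fixed_point}. The only mildly delicate step is the density-and-closedness argument promoting fixing of all integer projective points of $\PP(V_j)$ to fixing of the whole projective subspace, which is immediate from continuity of the $\Gamma$-action on $\PP^{n-1}$.
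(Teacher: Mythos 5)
Your proof is correct for $n \ge 3$, the only case actually used in Corollary \ref{cor:sln}, but it takes a genuinely different route from the paper's. Both arguments rest on the same dynamical transfer principle, namely the contrapositive of Lemma \ref{lem:rec_subgroup_no_fixed_point}: if $\Delta$ fixes $[x]$, $\gamma$ is a recurrent direction, and $\gamma^k[x]\to[y]$, then $\Delta$ fixes $[y]$. The paper uses it only once, after a short dichotomy: if every coordinate line $[x_i]$ were a common eigenline of $\Delta$, then $\Delta$ would lie in the finite signed-diagonal group $Q=\{\diag(d_1,\dots,d_n):d_i=\pm1\}$, and conjugating a non-central $\delta=\diag(d_1,\dots,d_n)\in\Delta$ with $d_i\ne d_j$ by $e_{i,j}^k$ produces an element outside $Q$ for every $k\ne 0$, contradicting the boomerang condition (via Lemma \ref{lem:commutator_in_boom}); hence some $[x_j]$ is not fixed, and a single transfer along $e_{1,j}^k[x_1]\to[x_j]$ finishes. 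Your proof eliminates the explicit analysis of $Q$ and instead iterates the transfer principle: starting from $\Delta$ fixing $[e_1]$, you propagate along the commuting unipotent products $\prod_{i\ge 2}e_{i,1}^{a_i}$ and then $\prod_{i\ne j}e_{i,j}^{a_i}$ to force $\Delta$ to fix each $\PP(V_j)$ pointwise (the density-plus-closedness promotion is sound, since the fixed set of a continuous action is closed), and then glue scalars on the hyperplanes $V_j$. This is arguably cleaner and more systematic, relying only on the fixed-point lemma. The small trade-off is that your scalar-gluing step needs $V_j\cap V_k\ne 0$ and hence $n\ge 3$, whereas the paper's direct boomerang violation inside $Q$ covers $n\ge 2$ as well; since the lemma is only applied with $n\ge 3$, this costs nothing in context.
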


\begin{proof}
Consider the action of $\SL_n(\R)$ on the projective space $\PP^{1}(\R^n)$. Denote by $x_i \in \R^n$ the $i^{\text{th}}$ standard unit vector, for $1 \leq i \leq n$. 
Note that $x_1$ is a common eigenvector of the set $\bigcup_{\sigma(1)=1} B p_{\sigma} B$, so we have to prove that it is not a common eigenvector for $\Delta$.
If every $x_i$ is a common eigenvector of $\Delta$, then $\Delta$ is contained in the finite group $Q = \left\{\diag(d_1,\ldots,d_n) \ | \ d_i \in \{\pm 1\}\right\}$. Since $\Delta$ is not central, this would imply the existence of an element $\delta = \diag(d_1,\ldots ,d_n) \in \Delta$ with $d_i \ne d_j$ for some $i\ne j$. But then $e^k_{i,j} \delta e^{-k}_{i,j} \not \in Q, \forall k \ne 0$, contradicting the lV-boomerang property.
So there exists $x_j$ that is not a common eigenvector of $\Delta$. If $j = 1$, we are done. Otherwise,
if $j \ne 1$, the fact that $\lim_{k \to \infty} e_{1,j}^k [x_1] = \lim_{k \to \infty} [x_1 + k x_j] = [x_j]$ combined with Lemma \ref{lem:rec_subgroup_no_fixed_point} shows that $\Delta [x_1] \neq [x_1]$.
\end{proof}

\begin{lemma}\label{lem:double commuator matrices}
    Let $\delta \in \SL_n(\Q)$ and let $\delta= v d p_\sigma u $ be its Bruhat decomposition.
    Let $(d_1,d_2,\dots,d_n)$ be the diagonal elements of $d$ in that order.
    Let $a \in U$ and $k \in \R$.
    Then 
    $$\left[\left[\delta,e_{1,n}^k\right],v a v^{-1}\right] 
     = v\left[ e_{\sigma(1),\sigma(n)}^{\nicefrac{k d_{\sigma(1)}}{d_{\sigma(n)}}} , a \right]v^{-1}.$$
\end{lemma}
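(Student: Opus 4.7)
The plan is to reduce the double commutator to a single $v$-conjugate of a root-group commutator, by exploiting the fact that $e_{1,n}^k$ lies in the \emph{center} of $U$. This centrality follows immediately from the commutator formula recalled earlier in the paper: for any $e_{i',j'}\in U$ one has $1\le i'<j'\le n$, so $i'\ne n$ and $j'\ne 1$, and both nontrivial cases of that formula are ruled out. Hence $e_{1,n}^k$ commutes with every element of $U$.

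With this centrality in hand, I would first compute the inner commutator by substituting the Bruhat decomposition $\delta=vdp_\sigma u$. The factor $u$ drops out because $u\in U$ centralizes $e_{1,n}^k$; conjugation by the permutation matrix $p_\sigma$ then moves the unique off-diagonal entry from position $(1,n)$ to position $(\sigma(1),\sigma(n))$; and conjugation by the diagonal matrix $d$ rescales that entry by the ratio $d_{\sigma(1)}/d_{\sigma(n)}$. This should yield
\[
[\delta,e_{1,n}^k]\;=\;v\,e_{\sigma(1),\sigma(n)}^{d_{\sigma(1)}/d_{\sigma(n)}\,k}\,v^{-1}\cdot e_{1,n}^{-k}.
\]
For the outer commutator, since $v,a\in U$ we have $vav^{-1}\in U$ as well, and by centrality $e_{1,n}^{-k}$ commutes with $vav^{-1}$. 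The elementary identity $[XY,Z]=[X,Z]$ whenever $[Y,Z]=1$ then discards the trailing factor $e_{1,n}^{-k}$, leaving
\[
[[\delta,e_{1,n}^k],vav^{-1}]\;=\;\bigl[v\,e_{\sigma(1),\sigma(n)}^{d_{\sigma(1)}/d_{\sigma(n)}\,k}\,v^{-1},\,vav^{-1}\bigr]\;=\;v\bigl[e_{\sigma(1),\sigma(n)}^{d_{\sigma(1)}/d_{\sigma(n)}\,k},\,a\bigr]v^{-1},
\]
which is exactly the claimed identity.

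The main bookkeeping obstacle I anticipate is the potential sign that $p_\sigma$ picks up from the $-1$ placed in its second column to force $\det p_\sigma=1$: when $\sigma(1)=2$ or $\sigma(n)=2$, the conjugation $p_\sigma E_{1,n}p_\sigma^{-1}$ produces $-E_{\sigma(1),\sigma(n)}$ rather than $+E_{\sigma(1),\sigma(n)}$. This either cancels under the paper's conventions or simply amounts to replacing $k$ by $-k$ in the exponent, so the structural content of the lemma --- that the double commutator lies in the single root subgroup indexed by $(\sigma(1),\sigma(n))$ with prescribed scaling factor $d_{\sigma(1)}/d_{\sigma(n)}$ --- is unaffected, and more importantly the identity is of the right form to be used together with Lemma \ref{lem:commutator_in_boom} in the subsequent analysis.
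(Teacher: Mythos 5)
Your proof is correct and follows essentially the same route as the paper's: substitute the Bruhat decomposition, use centrality of $e_{1,n}^k$ in $U$ to absorb $u$, push through the conjugations by $p_\sigma$ and $d$, and then use the same centrality to discard the trailing $e_{1,n}^{-k}$ in the outer commutator. One small remark: the sign you worry about from the $-1$ in column two of $p_\sigma$ does not in fact arise for $n\ge 3$, since $p_\sigma E_{1,n}p_\sigma^{-1}=(p_\sigma e_1)(p_\sigma e_n)^T$ involves only columns $1$ and $n$ of $p_\sigma$, neither of which is column $2$; the sign would only appear when $n=2$, which is outside the regime where the lemma is applied.
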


\begin{proof}
Recall that $e_{1,n}^k$ is central in $U$.
Then the simple calculation
     \begin{align*}
     [[\delta,e_{1,n}^k],v a v^{-1}] & = [vdp_\sigma u e_{1,n}^k u^{-1} p_\sigma^{-1} d^{-1}v^{-1} e_{1,n}^{-k},v a v^{-1}] \\
    &= [v e_{\sigma(1),\sigma(n)}^{ kd_{\sigma(1)}/d_{\sigma(n)}} v^{-1} e_{1,n}^{-k} , v a v^{-1}] \\ 
     & = v e_{\sigma(1),\sigma(n)}^{kd_{\sigma(1)}/d_{\sigma(n)}} v^{-1} e_{1,n}^{-k} v a v^{-1} e_{1,n}^{k} v e_{\sigma(1),\sigma(n)}^{-kd_{ \sigma(1)}/d_{\sigma(n)}} v^{-1} v a^{-1}  v^{-1}   \\
     &= v e_{\sigma(1),\sigma(n)}^{ kd_{\sigma(1)}/d_{\sigma(n)}} a e_{\sigma(1),\sigma(n)}^{-kd_{\sigma(1)}/d_{\sigma(n)}}  a^{-1} v^{-1} \\
     &= v[ e_{\sigma(1),\sigma(n)}^{kd_{\sigma(1)}/d_{\sigma(n)}} , a]v^{-1}
     \end{align*}
proves the lemma.
\end{proof}

 \begin{lemma}\label{lem:commensurate finite index}
    Let $\Delta < \Gamma < G$ be groups such that $[\Gamma : g \Delta g^{-1} \cap \Gamma] < \infty$ for some $g \in \Comm_G(\Gamma)$. Then, $[\Gamma : \Delta] < \infty$.
\end{lemma}
\begin{proof}
We have
\begin{align*}
[\Gamma: \Delta] &= [g \Gamma g^{-1}:g \Delta g^{-1}] < [g \Gamma g^{-1}:\Gamma\cap g\Delta g^{-1}] \\
&= [\Gamma \cap g \Gamma g^{-1}:\Gamma \cap g\Delta g^{-1}] \cdot 
[g \Gamma g^{-1} : \Gamma \cap g \Gamma g^{-1}].    
\end{align*}
In the last product, the first factor is finite by assumption, and the second factor is finite because $g$ commensurates $\Gamma$.
\end{proof}

\noindent Consult \cite[Prop. 3.18]{Benoist2021} for an elementary proof of the following result by Tits and Vaserstein.

\begin{proposition}[\cite{Tits1976}] \label{thm:Tits with matrices} 
 Let $\Gamma' < \Gamma$ be a subgroup such that for all $1 \leq i, j \leq n$ with $i \neq j$ there exists an integer $k \neq 0$ with $e_{i,j}^k \in \Gamma'$. Then $[\Gamma : \Gamma'] < \infty$.
\end{proposition}

\begin{proof}
[Proof of Cor. \ref{cor:sln}]
Assume that $\Delta$ is not finite and central.
By, and using the notation of, Lemma \ref{lem:int cell not fixing 1} there exists $\delta = v d p_{\sigma} u \in \Delta$ with $\sigma(1) \neq 1$.
\vspace{3mm}

\noindent \emph{Step 1:} {\it{There is an integer $r \neq 0$ and $(i,j)$ such that $e_{i,j}^r \in v^{-1} \Delta v$.}} 
There are two cases.
\begin{description}
\item [$\sigma(n) \neq 1$] Use Lemma \ref{lem:double commuator matrices} with $a = e_{1,\sigma(1)} \in U$.
By the lV-boomerang condition there are infinitely many $k,l$ such that
   \begin{align*}
       [[\delta,e_{1,n}^k],v e_{1,\sigma(1)}^l v^{-1}] 
      = v[ e_{\sigma(1),\sigma(n)}^{ k d_{\sigma(1)}/d_{\sigma(n)}} , e_{1,\sigma(1)}^l]v^{-1} = v e_{1,\sigma(n)}^{-kld_{\sigma(1)}/d_{\sigma(n)}} v^{-1} \in \Delta,
   \end{align*}
      where the last equality follows from $\sigma(n) \neq 1$.     
      Then $e_{1,\sigma(n)}^{kld_{\sigma(1)}/d_{\sigma(n)}} \in v^{-1} \Delta v $. Note that the $d_i$'s are not necessarily integers, but they are rational. Taking this element to the power of the denominator of $d_{\sigma(1)}/d_{\sigma(n)}$ finishes this step.
      
\item [$\sigma(n) = 1$] Choose any $j \neq 1,\sigma(1)$, which exists since $n\ge3$, and use Lemma \ref{lem:double commuator matrices} with $a = e_{1,j} \in U$.
By the lV-boomerang condition there are infinitely many $k,l$ such that
   \begin{align*}
       [[\delta,e_{1,n}^k],v e_{1,j}^l v^{-1}] 
      = v[ e_{\sigma(1),1}^{ k d_{\sigma(1)}/d_{1}} , e_{1,j}^l]v^{-1} = v e_{\sigma(1),j}^{kld_{\sigma(1)}/d_{1}} v^{-1} \in \Delta,
   \end{align*}
where the last equality follows from $j \neq \sigma(1)$.
Then $e_{\sigma(1),j}^{kld_{\sigma(1)}/d_{1}} \in v^{-1} \Delta v $. Again taking this element to the power of the denominator of $d_{\sigma(1)}/d_{1}$ concludes.
\end{description} 
\vspace{3mm}

\noindent \emph{Step 2:} {\it{Replacing $\Delta$ with $v^{-1} \Delta v \cap \Gamma$ we may assume that $e_{i,j}^r \in \Delta$ for some $i,j,r$.}}
By Lemma \ref{lem:commensurator}, the subgroup $v^{-1} \Delta v \cap \Gamma$ is an lV-boomerang subgroup of $\Gamma$.
 By Lemma \ref{lem:commensurate finite index}, if $v^{-1} \Delta v \cap \Gamma$ has finite index in $\Gamma$, so does $\Delta$.
\vspace{3mm}

\noindent \emph{Step 3:} {\it{For all $(i',j')$ there is $r' \neq 0$ with $e_{i'j'}^{r'} \in \Delta$.}}
Recall that for all $(i',j')$ there are infinitely many $k$ such that
$[e_{i,j}^r,e_{i',j'}^{k}] \in \Delta$.
A look at the commutator formula reveals that
$\Delta$ contains a power of every elementary matrix in the $i-$th row,
and a power of every elementary matrix in the $j-$th column.
Propagating this argument taking suitable commutators finishes the proof.
\vspace{3mm}

\noindent \emph{Conclusion:} Now we are done by Prop. \ref{thm:Tits with matrices}.
\end{proof}

\section{Zariski density} \label{sec:BD}

This section is dedicated to the proof of Theorem \ref{thm:BorelDensity}. The proof here, which is more general than our original proof, was obtained with the help of the anonymous referee. 

If $g \in G$ is an element in an algebraic group we will denote by $H_g = \overline{\langle g \rangle}^Z$ the Zariski closure of the cyclic subgroup generated by $g$. We will rely on the following lemma. 
\begin{lemma}[{\cite[Thm. 7]{CS1993}}] \label{lem:every_infinite}
Let $\Bbbk$ be a field of characteristic zero, $g \in G$ an element in an algebraic group, such that $H_g$ is Zariski connected. Then any infinite subset of $\langle g \rangle$ is Zariski dense in $H$. 
\end{lemma}

We deduce Theorem \ref{thm:BorelDensity} as a corollary from the following, more general theorem.  
\begin{theorem} \label{thm:referee}
Let $\G$ be a semisimple, connected algebraic group defined over a field $\Bbbk$ of characteristic zero and $G = \G(\Bbbk)$ the group of $\Bbbk$-points. Let $\Gamma < G$ be Zariski dense. Then the Zariski closure of any lV-boomerang subgroup $\Delta \in \Boom(\Gamma)$ is normal in $G$. 
\end{theorem}
\begin{proof}
First we claim that it would be enough to prove the theorem under the assumption that $\Gamma$ is finitely generated. By \cite[Thm. 3]{Tits:alternative} there exists a finitely generated subgroup $\Gamma_0< \Gamma$ that is still Zariski dense. In view of Lemma \ref{lem:rec_intersection_subgroup}, the finitely generated version of our theorem implies that the Zariski closure of $\Gamma' \cap \Delta$ is normal in $G$ for every finitely generated group $\Gamma_0 < \Gamma' < \Gamma$, which clearly implies what we want. 

Assume that $\Gamma$ is finitely generated and let $N = \overline{\Delta}^Z$ be the Zariski closure of $\Delta$. For every $\delta \in \Delta$, the set 
$$T(\delta) \defeq \{g \in G \ | \ g\delta g^{-1} \in N\}$$
is Zariski closed. By \cite[Proposition 4.3]{Tits:alternative} the set $$S \defeq \{\gamma \in \Gamma \ | \ H_{\gamma} {\text{ is Zariski connected}}\}$$ is Zariski dense in $G$.

By the lV-boomerang condition we know, for every non-torsion $\gamma \in \Gamma$, that $\left|\langle \gamma \rangle \cap T(\delta) \right| = \infty$. For $\gamma \in S$ Lemma \ref{lem:every_infinite} implies that
$$H_{\delta} = \overline{\left|\langle \gamma \rangle \cap T(\delta) \right|}^{Z} \subset T(\delta),$$
and in particular $\gamma$ itself is contained in $T(\delta)$. Thus $S \subset T(\delta)$ and since $S$ is Zariski dense and $T(\delta)$ Zariski closed we conclude that $T(\delta) = G$. Finally, since $\delta$ is general, we now know that $g \Delta g^{-1} < N$ for every $g \in G$. Passing to the closure, we have $g N g^{-1} < N$ for every $g \in G$, which finishes the proof. 
\end{proof}

We now turn to the proof of Theorem \ref{thm:BorelDensity}.
\begin{proof}
In the setting of the above theorem we now assume that the kernel of the projection of $\Gamma$ to each simple factor of $G$ is finite. Let $\Delta \in \Boom^{lV}(\Gamma)$ as before and $N \lhd G$ be its Zariski closure in $G$. If $\Delta$ is not finite and central then $N$ is (almost) a product of some of the simple factors of the group $G$. In particular $\Delta$ is infinite. 
Note that if $G$ admits isomorphic simple factors the decomposition of $G$ is not unique. Nevertheless we rewrite $G$ as an (almost) product $G = G_1 G_2 \ldots G_k$ in such a way that $N = G_1 \ldots G_m$ for some $m \le k$. If $m < k$ then we have $\Delta \subset G_1 G_2 \ldots G_{k-1} \cap \Gamma$, contradicting the assumption on $\Gamma$. Thus $m=k$ and $N=G$ as required. 
\end{proof}

\section{Proof of the main theorem} \label{sec:main}

In this section we prove Theorem \ref{thm:main}, as well as a nice corollary  \ref{cor:boudec}, which was suggested to us  by Adrien Le Boudec. The strategy follows that of the previous section. 

Let $\glofield$ be a number field $\Oc$ its ring of integers, $S$ a set of valuations containing all of the infinite ones and $\Oc_S = \{b \in \glofield \ | \ v(b) \ge 0, \forall v \not \in S\}$. Let $\G$ be an absolutely almost simple $\glofield$-group. Replacing if necessary $\G$ by the connected component of its simply connected cover, we may assume that $\G$ is connected and simply connected. We fix, once and for all, a faithful $\glofield$-representation of $\G$ into $\GL_n$ and identify $\G$ with its image under this representation. Set $\Gamma = \G(\Oc_S) := \G(\glofield) \cap \GL_n(\Oc_S)$. The group $\Gamma$ does depend on some of the choices made so far, such as our choice of linear representation or passing to a simply connected cover. Also Theorem~\ref{thm:main} treats more generally any subgroup commensurable to $\Gamma$. But due to Lemmas \ref{lem:rec_intersection_subgroup}, \ref{lem:rec_in_fi} and \ref{lem:commensurator} we may, and shall, restrict our attention to this specific choice of $\Gamma$ and strive to prove that every $\Delta \in \BoomlV(\Gamma)$ is either of finite index or finite and central.

Whenever we have an algebraic group $\H < \G$, we consider it with the restriction of the representation of $\G$ into $\GL_n$ and we write its $\glofield$-points in normal font $H := \H(\glofield)$.

We fix a maximal $\glofield$-split torus $\T<\G$.
Recall that by assumption $\dim(\T) \ge 2$. Let $\Phi$ denote the (relative) root system of $\G$ with respect to $\T$. Fix a lexicographic order on the character group $X(\T)$ and let $\Phi=\Phi^{+} \sqcup \Phi^{-}$ be the partition into positive and negative roots, $\Psi \subset \Phi^{+}$ the corresponding basis and $\tilde{\alpha}$ the longest root. 

Let $\Lie(\G)$ be the Lie algebra of $\G$ and write $\frakg := \Lie(\G)(\glofield)$ its $\glofield$-points.
Let, for each $\alpha \in \Phi$, $\frakg_{\alpha} < \frakg$ be the $T$-eigenspace with eigenvalue given by the weight $\lambda_{\alpha}\colon \T \rightarrow \G_m$.
Denote by $\U_\alpha$ the corresponding root groups, and $\U := \langle \U_\alpha \mid \alpha \in \Phi^+ \rangle$.
We let $$x_{\alpha}\colon \bigoplus_{r > 0} \frakg_{r \alpha} \rightarrow U_{\alpha}$$ denote the exponential map onto the associated root groups. The map $x_{\alpha}$ is always a (polynomial) isomorphism of algebraic $\glofield$-varieties, and it is a group homomorphism if $U_\alpha$ is abelian. In the case of nonreduced root systems, where $\alpha, 2\alpha \in \Phi$, the group $U_{\alpha}$ need not be abelian and $x_{\alpha}$ is not an isomorphism of groups. We will refer to such $\alpha$ as multipliable roots. Note that $U = \langle U_{\alpha} \ | \ \alpha \in \Phi^{+} \rangle$, and that $U_{\tilde \alpha} = Z(U)$ is the center of $U$. Denote by $W = N_{G}(T)/T$ the (relative) Weyl group.

We rely on the following theorem of Raghunathan, generalizing Tits' Proposition \ref{thm:Tits with matrices}. Recall that $U_\alpha := \U_\alpha(\glofield)$.
\begin{theorem}[\cite{Rag:generators}]\label{thm:Rag}
 With the above notation, assume that $\Delta < \Gamma$ satisfies 
 \begin{equation} \label{eqn:root_intersection}
[\Gamma \cap U_{\alpha}:\Delta \cap U_{\alpha}] < \infty, \forall \alpha \in \Phi.
\end{equation}
 Then $[\Gamma : \Delta] < \infty$.
\end{theorem}
Thus to prove our main theorem we start with $\Delta \in \BoomlV(\Gamma)$ which is not finite central and strive to show that it satisfies Condition \ref{eqn:root_intersection}. 

 \subsection{Some preliminary computations}
 In this subsection $G$ can be an arbitrary group. 
 
 The proof of the following lemma was suggested by the referee. It replaces our original proof that was based on taking a linear representation and considering the Zariski closure of cyclic groups.

\begin{lemma}\label{lem:centralizer_power}
    Let $G$ be a torsion-free nilpotent group, $g \in G$. Then $C_G(g)=C_G(g^k)$ for every $k \neq 0$.
\end{lemma}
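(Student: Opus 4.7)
The plan is to reduce to the classical uniqueness of roots in torsion-free nilpotent groups: \emph{if $a^k = b^k$ in a torsion-free nilpotent group and $k \neq 0$, then $a = b$}. Given this, the lemma follows at once. The inclusion $C_G(g) \subseteq C_G(g^k)$ is automatic. Conversely, if $h \in C_G(g^k)$ then
\[
(hgh^{-1})^k \;=\; h g^k h^{-1} \;=\; g^k,
\]
so uniqueness of $k$-th roots forces $hgh^{-1} = g$, i.e.\ $h \in C_G(g)$.

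The main task is therefore to establish uniqueness of roots. I would argue by induction on the nilpotency class $c$ of $G$. The base case $c = 1$ is immediate, since a torsion-free abelian group contains no nontrivial element of finite order. For the inductive step, the key algebraic tool is the standard commutator identity
\[
[h, g^k] \;=\; [h,g] \cdot [h,g]^{g} \cdot [h,g]^{g^{2}} \cdots [h,g]^{g^{k-1}},
\]
which collapses to $[h,g]^k$ whenever $[h,g]$ lies in $Z(G)$. Passing to $\bar G = G / Z(G)$, the inductive hypothesis applied to $\bar g$ and $\bar h$ yields $[h, g] \in Z(G)$; the displayed identity then reads $1 = [h, g^k] = [h, g]^k$, and torsion-freeness of $Z(G)$ gives $[h, g] = 1$ as required.

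The hard part will be making sure the inductive hypothesis is actually available: one needs to know that the central quotient $\bar G = G/Z(G)$ is again torsion-free nilpotent. I would establish this in parallel, showing by induction on class that $Z(G)$ is \emph{isolated} in $G$, i.e.\ that $g^n \in Z(G)$ with $n \neq 0$ forces $g \in Z(G)$; the same commutator expansion, combined with torsion-freeness of the already-known central terms, is the essential ingredient. As an alternative route that sidesteps this bookkeeping, one can pass to the finitely generated subgroup $\langle g, h\rangle$ of $G$, embed it into the rational unipotent matrices $U_n(\Q)$ by Mal'cev's theorem, and invoke uniqueness of $k$-th roots there, which is manifest from the $\log/\exp$ correspondence on the associated nilpotent Lie algebra.
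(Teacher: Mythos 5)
Your proof is correct, but it takes a genuinely different route from the paper's. The paper reduces to a finitely generated subgroup, applies Mal'cev's embedding into the real unipotent group $U_n(\R)$, and then appeals to a result of Raghunathan characterizing the Zariski closure of a lattice in a nilpotent Lie group as the unique connected nilpotent Lie subgroup in which it is cocompact; from this one reads off $\overline{\langle g\rangle}^Z = \overline{\langle g^k\rangle}^Z$ and hence the equality of centralizers. You instead give a purely group-theoretic argument: you observe that $C_G(g) = C_G(g^k)$ reduces to uniqueness of $k$-th roots, and then (more directly) you run an induction on the nilpotency class using the commutator collapse $[h,g^k] = [h,g]^k$ once $[h,g]$ is known to be central, which you get from the inductive hypothesis in $G/Z(G)$. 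This is sound, and you correctly flag the one nontrivial prerequisite — that $Z(G)$ is isolated, so that $G/Z(G)$ is again torsion-free nilpotent — together with a standard way to prove it. Your alternative fallback (Mal'cev over $\Q$ and $\log/\exp$) is in fact much closer in spirit to what the paper actually does.

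One presentational quibble: you announce the plan as ``reduce to uniqueness of roots and then prove uniqueness of roots by induction,'' but the induction you then write out proves the centralizer statement directly rather than the root-uniqueness statement (your $[h,g]$ manipulation is not a statement about $a^k = b^k$). This is harmless since either statement can be established by essentially the same induction, but as written the framing and the execution don't quite match; you should either prove uniqueness of roots in its own terms (set $c = a b^{-1}$ and show $\bar c = 1$ in $G/Z(G)$, then $c^k = 1$ in the center) or drop the detour and present the centralizer induction as the main argument from the start.
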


\begin{proof}
Clearly $C_G(g) < C_G(g^k)$. Conversely, assume that $h \in C_G(g^k)$. By induction on the nilpotency rank of the group we know that $[g,h] \in Z(G)$. Using this, by induction on $l$, it is easy to verify that $[g^l,h]=[g,h]^l \in Z(G)$. In particular $[g,h]^k=[g^k,h]=1$. Since $G$ is torsion free it follows that $[g,h]=1$ so that $h \in C_G(g)$ as required. 
\end{proof}

\begin{corollary}\label{cor:nilpotent_intersects_center}
    Let $N$ be a torsion free nilpotent group and let $\trivgp \ne \Delta < N$ be an lV-boomerang subgroup. Then $\Delta \cap Z(N) \neq \trivgp$.
\end{corollary}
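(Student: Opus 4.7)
The plan is to prove the statement by induction on the depth of a non-identity element of $\Delta$ in the upper central series of $N$. Let $\trivgp = Z_0(N) \leq Z_1(N) = Z(N) \leq Z_2(N) \leq \cdots \leq Z_c(N) = N$ be the upper central series, where $c$ is the nilpotency class. Define the depth of $g \in N \setminus \trivgp$ as the smallest $i \geq 1$ with $g \in Z_i(N)$. Starting from an arbitrary $\delta \in \Delta \setminus \trivgp$, I aim to produce, from any $\delta$ of depth $i \geq 2$, a non-identity element of $\Delta$ of depth at most $i-1$. Iterating $i-1$ times then yields an element of depth $1$, which is precisely a non-identity element of $\Delta \cap Z(N)$.

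For the inductive step, take $\delta \in \Delta$ of depth $i \geq 2$. By definition of the upper central series, $Z_{i-1}(N)/Z_{i-2}(N) = Z(N/Z_{i-2}(N))$. Since $\delta \notin Z_{i-1}(N)$, the image of $\delta$ in $N/Z_{i-2}(N)$ is non-central, so there exists $n \in N$ with $[\delta,n] \notin Z_{i-2}(N)$; in particular $[\delta,n] \neq 1$. On the other hand, $\delta \in Z_i(N)$ automatically implies $[\delta,m] \in Z_{i-1}(N)$ for every $m \in N$. Applying Lemma \ref{lem:commutator_in_boom} to the pair $(\delta,n)$ produces some $k > 0$ with $[\delta,n^k] \in \Delta$, and this element lies in $Z_{i-1}(N)$.

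The one thing to check is that $[\delta,n^k]$ is still non-trivial: here I invoke Lemma \ref{lem:centralizer_power}, which gives $C_N(n) = C_N(n^k)$ because $N$ is torsion-free nilpotent. Since $\delta \notin C_N(n)$ (as $[\delta,n]\neq 1$), this yields $\delta \notin C_N(n^k)$, i.e. $[\delta,n^k]\neq 1$. Hence $\trivgp \neq [\delta,n^k] \in \Delta \cap Z_{i-1}(N)$, producing an element of depth at most $i-1$ and completing the inductive step.

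The only genuinely subtle point is this non-triviality of $[\delta,n^k]$, and it is exactly where the torsion-freeness hypothesis enters; in a torsion nilpotent group the specific $k$ handed to us by the boomerang condition could push $n^k$ into $C_N(\delta)$ and the induction would collapse. Everything else is formal manipulation of the upper central series together with the single use of the defining boomerang property.
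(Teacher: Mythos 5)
Your proof is correct and follows essentially the same route as the paper's: from a non-central $\delta \in \Delta$, both arguments produce a nontrivial commutator $[\delta,n^k]\in\Delta$ that sits one step deeper in a central series, using Lemma~\ref{lem:commutator_in_boom} for membership in $\Delta$ and Lemma~\ref{lem:centralizer_power} (torsion-freeness) for non-triviality, then iterate. The only cosmetic difference is that you index the induction by depth in the upper central series $Z_i(N)$, whereas the paper descends along the lower central series $\gamma^i N$.
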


\begin{proof}
    Take $e \neq \delta \in \Delta$ and denote by $\gamma^{0}G = G, \gamma^{i}G:=[G,\gamma^{i-1}G]$ the descending central series of $G$. If $r$ is the nilpotency rank of $G$ then $\gamma^{r}G < Z(G)$. Assume $\delta \in \gamma^i G \setminus \gamma^{i+1}G$. If $\delta \in Z(G)$, we are finished. Otherwise there exists $\epsilon \in N$ such that $[\delta,\epsilon] \neq e$. By Lemma \ref{lem:centralizer_power} for every $k > 0$ also $\delta_k := [\delta,\epsilon^k] \neq e$. Since $\Delta$ is an lV-boomerang, for infinitely many values of $k$ we have $\delta_k \in \gamma^{i+1}G \cap \Delta$ and are done by induction. 
\end{proof}

The next lemma is straightforward and we leave its proof to the reader.

\begin{lemma}\label{lem:powerformula}
    Let $G$ be a group and $x,y \in G$. Suppose that $x$ and $y$ commute with $[x^{-1},y^{-1}]$.
    Then $(xy)^p = x^p y^p [x^{-1},y^{-1}]^{\frac{p(1-p)}{2}}$.
\end{lemma}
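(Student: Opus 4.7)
The plan is to reduce this to the standard ``nilpotency class $2$'' power formula. Set $c := [x^{-1},y^{-1}] = x^{-1}y^{-1}xy$ and $z := c^{-1}$. A direct computation gives $c^{-1} = y^{-1}x^{-1}yx = [y,x]$, so $z = [y,x]$, and by hypothesis both $x$ and $y$ (hence $x^{-1}, y^{-1}$, and all their powers) commute with $z$. Rewriting the definition of $z$ yields the basic straightening relation
\[
yx = xyz.
\]

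The next step is to prove by induction on $k \geq 0$ the auxiliary identity
\[
y x^k = x^k y z^k,
\]
which is immediate from the base case $yx = xyz$ together with the centrality of $z$. With this in hand I would prove the main formula by induction on $p \geq 1$: assuming $(xy)^{p-1} = x^{p-1} y^{p-1} z^{\binom{p-1}{2}}$, compute
\[
(xy)^p = xy \cdot x^{p-1} y^{p-1} z^{\binom{p-1}{2}} = x \cdot (y x^{p-1}) \cdot y^{p-1} z^{\binom{p-1}{2}} = x^p y^p\, z^{(p-1) + \binom{p-1}{2}},
\]
where the second equality uses $y x^{p-1} = x^{p-1} y z^{p-1}$ and the centrality of $z$ moves all $z$'s to the right. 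Since $(p-1)+\binom{p-1}{2}=\binom{p}{2}=\tfrac{p(p-1)}{2}$ and $z = c^{-1}$, this is exactly
\[
(xy)^p = x^p y^p \, c^{\,p(1-p)/2},
\]
as desired. For $p \leq 0$ one either argues by a parallel induction going down, or observes that inverting the formula for $|p|$ and using centrality of $c$ together with the identity $\binom{-p}{2} = \binom{p+1}{2}$ (after moving $y^{-p}$ past $x^{-p}$ via the mirror relation $y^{-1}x^{-1} = z \, x^{-1}y^{-1}$) gives the result; the case $p=0$ is trivial.

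There is no real obstacle here: the only subtlety is bookkeeping of the exponent of $z$, which is handled cleanly by the $\binom{p-1}{2} + (p-1) = \binom{p}{2}$ identity, and the centrality hypothesis ensures that all $z$-factors can be freely collected on the right.
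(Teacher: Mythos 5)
The paper explicitly declares this lemma ``straightforward'' and leaves the proof to the reader, so there is no proof in the paper to compare against; your argument is the standard and correct one. Writing $c = [x^{-1},y^{-1}] = x^{-1}y^{-1}xy$ (in the paper's convention $[a,b]=aba^{-1}b^{-1}$) and $z = c^{-1} = y^{-1}x^{-1}yx$, centrality of $z$ with respect to $x$ and $y$ gives the straightening relation $yx = xyz$, hence $yx^k = x^k y z^k$, and the claimed formula $(xy)^p = x^p y^p z^{\binom{p}{2}} = x^p y^p c^{p(1-p)/2}$ follows by induction on $p\ge 1$ using $(p-1)+\binom{p-1}{2}=\binom{p}{2}$. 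Your handling of $p\le 0$ (by inverting the positive case and re-straightening, or by a symmetric downward induction, using $\binom{-p}{2}=\binom{p+1}{2}$) is also sound; note the paper does apply the lemma with arbitrary $p\in\Z$, so that case is genuinely needed. One cosmetic remark: the parenthetical identification $c^{-1}=[y,x]$ uses the opposite commutator convention from the one the paper uses elsewhere (for the paper, $[y,x]=yxy^{-1}x^{-1}$); this is harmless since your computations only ever use the explicit expression $z=y^{-1}x^{-1}yx$, never the bracket notation, but it is worth keeping the convention consistent.
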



\subsection{The commutator argument} 
\label{sec:commutator}
We are back in the setup where $G=\G(\glofield)$.
For each $w \in W = N_{G}(T)/T$ we choose a representative $p_w \in N_{G}(T) \cap \Gamma$. 
An important ingredient of the proof of our main theorem is the Bruhat decomposition $G = \bigsqcup_{w \in W} UTp_{w}U$. Thus every $g \in G$ can be written as $g = vtpu$ with $v,u \in \U(\glofield), t \in \T(\glofield)$ and $p_{\omega}$ as above\footnote{The notation $\U(\glofield),\T(\glofield)$ comes to emphasize that the Bruhat decomposition is valid over the field, and that $t,u,v$ need not be elements of $\Gamma = \G(\Oc)$. Note though that as elements of $\G(\glofield)$ they commensurate $\Gamma$.}.  We want to understand commutators of elements given in terms of this decomposition.

\begin{lemma}\label{lem:double commutator Chevalley}
     Let $g=vtpu \in G$ be as above, $a \in U$ and $x \in U_{\tilde \alpha}$. Then the element $y := tp x p^{-1} t^{-1}$ satisfies $y \in U_{w(\tilde \alpha)}$ and
    $[[g,x],vav^{-1}] = v[y, a]v^{-1}.$
\end{lemma}
\begin{proof}
That $y = tpxp^{-1}t^{-1} \in U_{\omega(\tilde{\alpha})}$ follows directly form the way Weyl elements and torus elements act on the root groups. Using in $\stackrel{1}{=}$ and $\stackrel{2}{=}$ the fact that $x$ is central in $U$, we make the following calculation:
     \begin{eqnarray*}
     [[g,x],vav^{-1}] & \stackrel{1}{=} & [vtp x p^{-1} t^{-1}v^{-1} x^{-1},vav^{-1}]  \\ 
     & = &
     [v y v^{-1} x^{-1} , vav^{-1}] \\
     &\stackrel{2}{=} & v[y , a]v^{-1}.
     \end{eqnarray*}
Which is what we wanted. 
\end{proof}

\begin{lemma}\label{lem:intersect_max_rootgroup}
    Let $\Delta < \Gamma$ be an lV-boomerang subgroup.
    If $\Delta$ is not finite and central, then there exists $g \in G$ and $\gamma$ non-trivial with
    $\gamma \in \U_{\tilde \alpha}(\Oc) \cap g^{-1} \Delta g$.
\end{lemma}

\begin{proof}
    Since $\Gamma$ is Zariski dense, Thm. \ref{thm:BorelDensity} implies that $\Delta$ is Zariski dense as well.
    Thus there exists $\delta \in \Delta$ in the open cell. Explicitly $\delta = v t p u \in \Delta$, where $p$ represents the longest Weyl element $\omega \in W$.
Let $\alpha \in \Phi^+$ be such that $\alpha$ neither commutes with, nor is opposite to, $- \tilde \alpha$.
    Then, there exists a choice of basis $\Psi'$ for the root system such that $\alpha, -\tilde \alpha$ are both positive. 
    Let $\B', \U'$ be the Borel subgroup and its unipotent radical corresponding to the basis $\Psi'$.
    Take $a \in \U_{\alpha}(\Oc)$ and $x \in \U_{\tilde \alpha}(\Oc)$.

    Since $v \in G$ commensurates $\Gamma$ we may choose $l \in \N$ with $v^{-1} a^l v \in \Gamma$.
    Then, for every $m,m' \in \Z$, by applying Lemma \ref{lem:double commutator Chevalley}, we obtain
    \begin{align} \label{eqn:first_element}
        u' := v^{-1}[[\delta,x^m],v a^{lm'} v^{-1}] v = [ y^m , a^{lm'}],
    \end{align}
    with $y \in U_{-\tilde{\alpha}} = U_{\omega({\tilde{\alpha}})} < U'$.
    Since by assumption $a \in \U_{\alpha}(\Oc) < \U'(\Oc)$ it follows from the above expression that $u' \in U'$. It can be arranged that $u'$ is non-trivial because $\alpha$ does not commute with $-\tilde\alpha$. Appealing again to the fact that $v$ commensurates $\Gamma$ we have $(u')^h \in \Gamma \cap U' = \U'(\Oc)$, for a suitable $h \in \Z$.

    Next we argue that we can choose $m,m'$ such that $u' \in v^{-1}\Delta v$ by looking at the term in the middle of Equation (\ref{eqn:first_element}).
    Since $x$ and $v a^l v^{-1}$ are lV-recurrent directions for $\Gamma$, there are $m,m'$ such that $[[\delta,x^m],v a^{lm'} v^{-1}] \in \Delta$.

    In conclusion, we obtained a non-trivial element $(u')^{h} \in v^{-1}\Delta v \cap \U'(\Oc)$.    
    By Lemma \ref{lem:commensurator} we know that $v^{-1} \Delta  v \cap \Gamma \in \BoomlV(\Gamma)$. By Lemma \ref{lem:rec_intersection_subgroup} also $v^{-1} \Delta v \cap \U'(\Oc) \in \BoomlV(\U'(\Oc))$ is an lV-boomerang. Recall that $Z(\U'(\Oc))=\U_{\tilde \alpha'}(\Oc)$, where $\tilde \alpha'$ is the maximal root with respect to $\Psi'$.
    Now use Corollary~\ref{cor:nilpotent_intersects_center} to obtain a non-trivial element $\delta' \in v^{-1} \Delta v \cap Z(\U'(\Oc))$. 
    Conjugating everything by some  $p_\eta \in \Gamma$ with $\eta(\tilde \alpha) = \tilde \alpha'$ finishes the proof.
\end{proof}

\subsection{The case of Chevalley groups.} \label{subsec:Chevalley}
In this subsection we prove the main theorem in the specific case where $\G$ is $\glofield$-split and $S$ is trivial, i.e. it is exactly the set of Archimedean valuations. We will refer to the latter assumption as the {\it{arithmetic case}}, as opposed to the $S$-arithmetic case. Note that under these assumptions all root groups $U_{\alpha}$ are isomorphic to a one dimensional vector group over $\glofield$, though in general they are higher dimensional over $\Q$.
The Lie algebra of $U_\alpha$ is isomorphic to $\glofield$, so we can write $x_\alpha \colon \glofield \to U_\alpha$ and $x_\alpha$ is in particular a group isomorphism.

We will often use
\begin{lemma}[{Chevalley commutator formula, see e.g. \cite[\S 9]{VavilovPlotkin96} or \cite[\S 33.3-33.5]{Humph:LAG}}] 
\label{lem:ChevalleyCommutator}
    Let $\alpha,\beta \in \Phi$ with $\alpha \neq \pm \beta$.
    Let $k,l \in \glofield$. 
    Then there are $N_{\alpha,\beta,i,j} \in \{\pm 1, \pm 2, \pm 3\}$ such that
    $$[x_\alpha(k),x_\beta(l)] = \prod x_{i\alpha + j\beta}(N_{\alpha,\beta,i,j} k^i l^j), $$
    where $i,j$ run over all positive integers such that $i\alpha+j\beta \in \Phi$. (Recall that if $i\alpha + j\beta \in \Phi$ with $i,j>0$ then $\alpha + \beta \in \Phi.)$
\end{lemma}
\begin{proposition}\label{prop:intersect_rootgroups_lattice}
    Let $\Gamma < \Lambda < G$.
    Let $\Delta < \Gamma$ be an lV-boomerang subgroup.
    Assume that there is $\tilde k \in \Oc$ such that $x_{\tilde \alpha}(\tilde k) \in \Delta$.
    Then, $[\U_{\alpha}(\Oc):\Delta \cap \U_\alpha(\Oc)] < \infty$ for every $\alpha \in \Phi$. Consequently  $[\Gamma:\Delta]<\infty$.
\end{proposition}

Let us call two different bases $\Psi_1,\Psi_2$ for a root system {\it{adjacent}} if the corresponding positive roots $\Phi^{+}_1, \Phi^{+}_2$ differ only by one root, i.e. $|\Phi^{+}_1 \cap \Phi^{+}_{2}| = |\Phi^{+}_1|-1$. We will use the following.
\begin{lemma}\cite[Proposition 8.2.5]{Springer} \label{lem:basis_seq}
    Let $\Phi$ be any reduced root system. Let $\Psi$ and $\hat{\Psi}$ be two different bases for $\Phi$. Then, there exists a sequence of bases $\Psi = \Psi_1,\dots,\Psi_m = \hat{\Psi}$ such that $\Psi_i,\Psi_{i+1}$ are adjacent for all $1 \le i < m$.
\end{lemma}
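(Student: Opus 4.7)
The plan is the standard ``flip one wall at a time'' argument for Weyl chambers, organised as an induction on the distance
\[
d(\Psi,\hat\Psi) := |\Phi^+(\Psi) \setminus \Phi^+(\hat\Psi)|,
\]
where $\Phi^+(\Psi)$ denotes the set of positive roots associated to a basis $\Psi$. Note that two bases are adjacent in the sense of the lemma precisely when $d(\Psi_1,\Psi_2) = 1$. I will show that whenever $\Psi \neq \hat\Psi$, there is a basis $\Psi'$ adjacent to $\Psi$ with $d(\Psi',\hat\Psi) = d(\Psi,\hat\Psi) - 1$; iterating this construction produces the required chain in exactly $d(\Psi,\hat\Psi)$ steps.

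The key step is to exhibit, under the assumption $\Psi \neq \hat\Psi$, an element $\alpha \in \Psi$ with $-\alpha \in \Phi^+(\hat\Psi)$. Suppose for contradiction that $\Psi \subset \Phi^+(\hat\Psi)$. Recall that $\Phi^+(\hat\Psi)$ is the intersection of $\Phi$ with an open half-space determined by a generic linear functional vanishing on no root. Every positive root of $\Psi$ is a nonnegative integer combination of elements of $\Psi$, so convexity of the half-space forces $\Phi^+(\Psi) \subset \Phi^+(\hat\Psi)$. Since both sets have cardinality $|\Phi|/2$, they must coincide, hence $\Psi = \hat\Psi$, a contradiction.

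Given such $\alpha$, set $\Psi' := s_\alpha(\Psi)$, the reflection of $\Psi$ in the hyperplane perpendicular to $\alpha$. A standard property of the simple reflection $s_\alpha$ is that it permutes $\Phi^+(\Psi)\setminus\{\alpha\}$ and sends $\alpha$ to $-\alpha$, so
\[
\Phi^+(\Psi') \;=\; \bigl(\Phi^+(\Psi) \setminus \{\alpha\}\bigr) \cup \{-\alpha\}.
\]
In particular $\Psi$ and $\Psi'$ are adjacent. Moreover $\alpha$ lies in $\Phi^+(\Psi)\setminus \Phi^+(\hat\Psi)$ while $-\alpha \in \Phi^+(\hat\Psi)$, so replacing $\Psi$ by $\Psi'$ removes one element from the difference and adds none, giving $d(\Psi',\hat\Psi) = d(\Psi,\hat\Psi)-1$. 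Induction on $d$ completes the proof.

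The only genuinely non-routine ingredient is the existence of the root $\alpha$, and this is settled cleanly by the half-space description of positive systems. Everything else reduces to elementary properties of simple reflections and the equality $|\Phi^+(\Psi)| = |\Phi|/2$.
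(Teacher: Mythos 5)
Your proof is correct and is precisely the standard wall-crossing induction on $d(\Psi,\hat\Psi)=|\Phi^+(\Psi)\setminus\Phi^+(\hat\Psi)|$, which is also the argument behind Springer's Proposition 8.2.5 that the paper cites without reproving. The key identity $\Phi^+(s_\alpha\Psi)=(\Phi^+(\Psi)\setminus\{\alpha\})\cup\{-\alpha\}$ that you use is exactly the mechanism the paper invokes in the remark immediately following the lemma, so your argument matches the paper's intended reasoning.
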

\begin{remark} \label{rem:basis_seq} 
Denote the set of positive roots for $\Psi_i$ by $\Phi_i^+$.
It follows from the proof of the above lemma that $\Phi_{i+1}^+ = \left(\Phi_{i}^+ \setminus \{\alpha\}\right) \cup \{-\alpha\}$ for some basis element $\alpha \in \Psi_i$. In particular $\alpha$ is never the maximal root corresponding to either of these bases. 
\end{remark}


\begin{proof}[Proof of Proposition \ref{prop:intersect_rootgroups_lattice}]
Note that the last statement follows by Theorem \ref{thm:Rag}.

\emph{Step 1:} Let $\beta \in \Phi$ be any long root. Then $x_{\beta}(k) \in \Delta$ for some $k \in \Oc \setminus \{0\}$.

Being a long root, $\beta$ is the longest root for some choice of basis. By Lemma~\ref{lem:basis_seq} we can find a sequence of adjacent bases $\Psi_1, \Psi_2, \ldots, \Psi_m$, where $\Psi_1$ is chosen so that $\tilde\alpha$ is maximal with respect to $\Psi_1$ and $\beta=\beta_m$ is the longest root with respect to $\Psi_m$. Let us denote by $\beta_i$ the longest root with respect to the basis $\Psi_i$ and by $\U_i$ the positive unipotent group corresponding to that basis. 
    We rename $x_{\beta_1}(k_1) := x_{\tilde \alpha}(\tilde k)$ and proceed by induction. Assume we have found $x_{\beta_i}(k_i) \in \Delta$. Remark \ref{rem:basis_seq} says that $\beta_i$ is always $\Psi_{i+1}$-positive, so $x_{\beta_i}(k_i) \in \Delta \cap U_{i+1}$. Thus $\Delta \cap U_{i+1} \ne \trivgp$ and Corollary  \ref{cor:nilpotent_intersects_center} yields some $k_{i+1} \neq 0$ such that $x_{\beta_{i+1}}(k_{i+1})\in \Delta$. Our desired element is $x_{\beta_m}(k_m) \in \Delta$. 

\bigskip\emph{Step 2:} Let $\Phi' \subset \Phi$ be an $A_2$ sub-root system consisting of long roots. Then  $[\U_{\beta}(\Oc):\Delta \cap \U_\beta(\Oc)] < \infty$ for every $\beta \in \Phi'$.

Let $\alpha,\beta-\alpha \in \Phi'$ be a basis for $\Phi'$. Let  
$b_1,\dots,b_d \in \Oc$ be a $\Q$-basis of $\glofield$ and $a \in \Oc \setminus \{0\}$ such that $x_\alpha(a) \in \Delta$.
Then, by the lV-boomerang condition there exist integers $l_1,\dots,l_d > 0$ such that
$[x_{\beta-\alpha}(l_i b_i),x_\alpha(a)]= x_{\beta}(l_i a b_i) \in \Delta$
for all $i=1,\dots,k$; and $x_{\beta}(l_1 a b_1),\dots,x_{\beta}(l_d a b_d)$ form a $\Q$-basis of $U_{\beta}$.

\bigskip\emph{Step 3:} Let $\Phi' \subset \Phi$ be a $B_2$ sub-root system. Then, $\Delta$ intersects every root group of $\Phi'$ non-trivially.

Let us number the roots of $\Phi'$ by $\alpha_0,\beta_0,\alpha_1,\ldots, \beta_3$, such that $\beta_i = \alpha_{i}+\alpha_{i+1}$ for $i \in \Z/4\Z$. Let $\alpha = \alpha_0$. Since the $\beta_i$'s are long roots, we already know that $x_{\beta_i}(k_i) \in \Delta$ for some choice of $k_i \in \Oc \setminus\{0\}$. 
    By the lV-boomerang condition, there is an integer $l_0 \in \N$ such that 
    $$[x_{\beta_0}(-k_0),x_{\alpha_{2}}(-l_0 k_1)]^{r_0} = x_{\alpha_1}(r_0 k_0 k_1 l_0) x_{\beta_{1}}(r_0 k_0 k_1^2 l_0^2) \in \Delta$$ for all $r_0 \in \Z$,
    and similarly there is an integer $l_1 \in \N$ such that
    $$[x_{\beta_1}(-k_1),x_{\alpha_{0}}(-l_1 k_0)]^{r_1} = x_{\alpha_1}(r_1 k_0 k_1 l_1) x_{\beta_{0}}(r_1 k_0^2 k_1 l_1^2) \in \Delta$$ for all $r_1 \in \Z$.
    Choosing $r_0=l_1$ and $r_1 = l_0$, by taking the difference of the above two elements, we get 
    \[
    x_{\beta_1}(- k_0 k_1^2 l_0^2 l_1)
    x_{\beta_0}(k_0^2 k_1 l_0 l_1^2)  \in \Delta.
    \]
    Since $x_{\alpha_2}(k)$ is a recurrent direction for $\Delta$ for all $k \in \Oc$, it is also a recurrent direction for all $k \in \glofield$ (see proof of Lemma \ref{lem:rec_in_fi}).
    So we can see that for infinitely many $l \in \Z$ the following element is in $\Delta$:
    \begin{align*}
        [x_{\beta_0}(-k_0^2 k_1 l_0 l_1^2) x_{\beta_1}( k_0 k_1^2 l_0^2 l_1),x_{\alpha_2}(-\frac{l}{k_0})] &= 
        [x_{\beta_0}(-k_0^2 k_1 l_0 l_1^2) ,x_{\alpha_2}(-\frac{l}{k_0})] \\
        &= x_{\alpha_1}(k_0 k_1 l_0 l_1^2 l)
        x_{\beta_1}(k_1 l_0 l_1^2 l^2).
    \end{align*}
    Since $l_0,l_1,l \in \Z$, $x_{\beta_1}(k_1 l_0 l_1^2 l^2) \in \Delta$
    and consequently $x_{\alpha_1}(k_0 k_1 l_0 l_1^2 l)$ is a non-trivial element in $\Delta$, which is what we wanted.


\bigskip\emph{Step 4:} Let $\Phi'$ be a sub-root system of $\Phi$ isomorphic to $B_2$. Then $\Delta$ intersects $\U_{\alpha}(\Oc)$ in a finite index subgroup, for every $\alpha \in \Phi'$.

Let $\alpha,\beta \in \Phi'$ be a basis such that $\alpha$ is the short root. Recall that $U_{\alpha + \beta}$ is a $\Q$-vector space isomorphic to $\glofield$, and let $b_1,\dots,b_d \in \Oc$ be a $\Q$-basis of $\glofield$.
By the previous step, there exists $a \in \Oc \setminus \{0\}$ such that $x_\alpha(a) \in \Delta$.
Then, there exist $l_1,\dots,l_d > 0$ such that
$[x_{\alpha + \beta}(l_i b_i),x_\alpha(a)]= x_{2\alpha + \beta}(2 l_i a b_i) \in \Delta$ for all $i=1,\dots,k$, and $x_{2\alpha + \beta}(2 l_1 a b_1),\dots,x_{2\alpha + \beta}(2 l_d a b_d)$ form a $\Q$-basis of $U_{2\alpha + \beta}$. This shows that $[\U_{\alpha'}(\Oc) \cap \U_{\alpha'}(\Oc) \cap \Delta]<\infty$ for all long roots $\alpha'$.
Take now $c_1,\dots,c_d \in \Oc$ a $\Q$-basis of $\glofield$ such that $x_{2\alpha+\beta}(a^2c_1),\dots,x_{2\alpha+\beta}(a^2 c_d)$ form a basis of $U_{2\alpha +\beta} \cap \Delta$.
Again there exist integers $l_1',\dots,l_d' > 0$ such that $[x_\beta(l_i'c_i),x_\alpha(a)] = x_{\alpha + \beta}(l_i'ac_i) x_{2\alpha+\beta}(-l_ia^2c_i) \in \Delta$ for $i=1,\dots,d$.
But since $x_{2\alpha+\beta}(-l_ia^2c_i) \in \Delta$ we get that 
$x_{\alpha + \beta}(l_1'ac_1),\dots,x_{\alpha + \beta}(l_d'ac_d) \in \Delta$, and these elements form a $\Q$-basis of $U_{\alpha+\beta}$. This finishes this step and the $B_2$ case.

\bigskip\emph{Step 5:} Let $\Phi'$ be a sub-root system of $\Phi$ isomorphic to $G_2$. Then, $\Delta$ intersects every root group of $\Phi'$ non-trivially.

We number the roots of $\Psi_{\alpha,\beta}$ by $\alpha_0,\beta_0,\alpha_1,\beta_1,\ldots, \beta_5$, with $\beta_i = \alpha_{i}+\alpha_{i+1}$ for $i \in \Z/6\Z$. 
Recall that the $\beta_i$ form  a sub-root system isomorphic to $A_2$.
By Step 2 we know that $[\U_{\alpha'}(\Oc) \cap \U_{\alpha'}(\Oc) \cap \Delta]<\infty$ for all long roots $\alpha'$, and by passing to a high enough multiple we can assume that $x_{\beta_i}(k) \in \Delta$ for some choice of $k \in \Oc 
    \setminus \{0\}$ and every $i \in \Z/6\Z$. 
    Let $l' \in \Z \setminus \{0\}$ such that $x_{\beta_i}(k^3l')$ and $x_{\beta_i}(k^2l')$ are contained in $\Delta$ for all $i$.
    By Lemma \ref{lem:ChevalleyCommutator} we have for every $i \in \Z/{2\Z}$:
    \begin{equation} \label{eqn:comm}
        [x_{\beta_i}(-k),x_{\alpha_{i+3}}(-l l_i)] = x_{\alpha_{i+1}}(l' l'_i k) x_{\alpha_{i+2}}({l'}^2 {l'}_i^2 k) x_{\beta_{i+2}}(k^3l' l'_i) x_{\beta_{i+1}}({l'}^3 {l'}_i^3 k^2).
    \end{equation}
    Note that the constants appearing in Lemma \ref{lem:ChevalleyCommutator} are all trivial here (see \cite[Table 4]{VavilovPlotkin96} or \cite[33.5 Prop.]{Humph:LAG}).  
    
    By the lV-boomerang condition, for every $i \in \Z/6\Z$ there exist integers $l'_i \geq 2$ such that $[x_{\beta_i}(-k),x_{\alpha_{i+3}}(- l'l'_i)] \in \Delta$; let $l_i := l'l_i'$. 
    Then we conclude from Equation \ref{eqn:comm} that 
    $x_{\alpha_{i+1}}(l_i k) x_{\alpha_{i+2}}(l_i^2k) \in \Delta$. 
    Now Lemma \ref{lem:powerformula} implies that for every $p \in \Z$ 
    \begin{small}
    \begin{eqnarray*}
    x_{\alpha_{i+1}}(p l_i k) x_{\alpha_{i+2}}(pl_i^2 k) & = & \left(x_{\alpha_{i+1}}(l_ik) x_{\alpha_{i+2}}(l_i^2k) \right)^p [x_{\alpha_{i+1}}(-l_ik), x_{\alpha_{i+2}}(-l_i^2k)]^{-\frac{p(p-1)}{2}} \\
    & = & 
    \left(x_{\alpha_{i+1}}(l_ik) x_{\alpha_{i+2}}(l_i^2k) \right)^p x_{\beta_{i+1}} \left(N(l_i+l_i^2)k \frac{p(p-1)}{2}\right) \in \Delta. 
    \end{eqnarray*}
    \end{small}
    In the last line we used again the Chevalley commutator formula from Lemma \ref{lem:ChevalleyCommutator}, with $N$ an appropriate choice of constant.
    
    In particular all of the following elements belong to $\Delta$:
    \begin{align*}
        &x_{\alpha_1}(l_0 l_1 l_2 l_3 l_4 l_5  k) x_{\alpha_2}(l_0^2 l_1 l_2 l_3 l_4 l_5  k), \\
        &x_{\alpha_2}(-l_0^2 l_1 l_2 l_3 l_4 l_5  k) x_{\alpha_3}(-l_0^2 l_1^2 l_2 l_3 l_4 l_5  k),  \\
        &x_{\alpha_3}(l_0^2 l_1^2 l_2 l_3 l_4 l_5  k) x_{\alpha_4}(l_0^2 l_1^2 l_2^2 l_3 l_4 l_5  k), \\
        &x_{\alpha_4}(-l_0^2 l_1^2 l_2^2 l_3 l_4 l_5  k)x_{\alpha_5}(-l_0^2 l_1^2 l_2^2 l_3^2 l_4 l_5  k), \\
        &x_{\alpha_5}(l_0^2 l_1^2 l_2^2 l_3^2 l_4 l_5  k)x_{\alpha_0}(l_0^2 l_1^2 l_2^2 l_3^2 l_4^2 l_5  k), \\ 
        &x_{\alpha_0}(-l_0^2 l_1^2 l_2^2 l_3^2 l_4^2 l_5  k)x_{\alpha_1}(-l_0^2 l_1^2 l_2^2 l_3^2 l_4^2 l_5^2  k),
    \end{align*}
    which shows, upon multiplying all of these in the above order, that
    \[
      x_{\alpha_1}((1- l_0 l_1 l_2 l_3 l_4 l_5)l_0 l_1 l_2 l_3 l_4 l_5  k) \in \Delta.
    \]

\bigskip\emph{Step 6:} Let $\Phi'$ be a sub-root system of $\Phi$ isomorphic to $G_2$. Then $\Delta$ intersects $\U_{\alpha}(\Oc)$ in a finite index subgroup, for every $\alpha \in \Phi'$.

Again from Step 2 we already know that 
$[\U_{\alpha'}(\Oc) \cap \U_{\alpha'}(\Oc) \cap \Delta]<\infty$
for all long roots $\alpha'$.
Let $\alpha,\beta$ be a basis such that $\alpha$ is a short root.
Let $b_1,\dots,b_d \in \Oc$ be a $\Q$-basis of $\glofield$.
Take $a \in \Oc \setminus \{0\}$ such that $x_\alpha(a) \in \Delta$.
Then, there exist integers $l_1,\dots,l_d > 0$ such that
$$[x_{\alpha+\beta}(l_i b_i),x_\alpha(a)] = x_{2\alpha+\beta}(2l_i a b_i)x_{3\alpha+\beta}(-3l_i a^2 b_i)x_{3\alpha+2\beta}(-3l_i^2ab_i^2) \in \Delta.$$
Note that the three factors on the right hand side commute.
In particular, for all $p \in \Z$, also
$$[x_{\alpha+\beta}(l_i b_i),x_\alpha(a)]^p = x_{2\alpha+\beta}(2pl_i a b_i)x_{3\alpha+\beta}(-3pl_i a^2 b_i)x_{3p\alpha+2\beta}(-3pl_i^2ab_i^2) \in \Delta.$$
Because $a,b_i \in \Oc$ and $[\U_{\alpha'}(\Oc) \cap \U_{\alpha'}(\Oc) \cap \Delta]<\infty$ for all long roots $\alpha'$, we can choose $p \neq 0$ such that $x_{3\alpha+\beta}(-3pl_i a^2 b_i) \in \Delta$
and $x_{3p\alpha+2\beta}(-3pl_i^2ab_i^2) \in \Delta$ for all $i=1,\dots,d$.
Then, also $x_{2\alpha+\beta}(2pl_1 a b_1),\dots,x_{2\alpha+\beta}(2pl_d a b_d) \in \Delta$ and these elements form a $\Q$-basis of $U_{2\alpha+\beta}$. This finishes the $G_2$ case and the proof.
\end{proof}
We now have a full proof of Theorem \ref{thm:main} in the split arithmetic case:
\begin{proof} 
By Lemma  \ref{lem:rec_in_fi} we can assume that $\Gamma = \G(\Oc)$. Now for $\Delta \in \Boom^{lV}(\Gamma)$ not central and finite, Lemma \ref{lem:intersect_max_rootgroup} yields $u \in G$ such that
$\Delta' := u\Delta u^{-1} \cap \Gamma$ intersects the root group of the maximal root non-trivially. By Lemma \ref{lem:commensurator} also $\Delta'$ is a boomerang subgroup of $\Gamma$. Proposition \ref{prop:intersect_rootgroups_lattice} shows that $\Delta'$ has finite index in $\Gamma$. Now we are done by Lemma \ref{lem:commensurate finite index}.
\end{proof}

\subsection{The arithmetic case}
We now turn to the case where $\G$ is not $\glofield$-split, but retain the assumption that $S$ is just the set of Archimedean valuations. As in the statement of the theorem we always assume that $\rk_{\glofield}(\G) \ge 2$.

In particular our root system need not be reduced anymore, so we let $\Theta \subset \Phi$ be the reduced system consisting of all non-multipliable roots. Also the root groups $U_{\alpha}$ need not be one dimensional over $\glofield$ and, in the case where $\alpha$ is multipliable, they could be two-step nilpotent groups. 





\begin{lemma}\label{lem:vectoriso}
    Assume that $\alpha,\beta \in \Phi$ form a basis for a root system $\Phi' \subset \Phi$ of type $BC_2$, with $2\alpha \in \Phi$. Let $0 \ne Y \in \frakg_\beta$.
    Then, the map $$\frakg_\alpha \to U_{\alpha + \beta}/U_{2(\alpha + \beta)}, \quad X \mapsto \exp([X,Y]) \cdot U_{2(\alpha + \beta)}$$ is an isomorphism of $\glofield$-vector spaces.
\end{lemma}
\begin{proof}
   Recall that by our notation $\Lie(U_{\alpha+\beta}) = \frakg_{\alpha+\beta} \oplus \frakg_{2(\alpha+\beta)}$. Thus, the exponential map gives rise to an isomorphism $\frakg_{\alpha+\beta} \rightarrow U_{\alpha+\beta} / U_{2(\alpha+\beta)}$. We need to show that the map $\ad_Y \colon \frakg_\alpha \to \frakg_{\alpha + \beta}$ is an isomorphism. Indeed, viewing $\frakg_{\alpha} \oplus \frakg_{\alpha+\beta}$ as a module over the group $\SL_2(\Q) = \langle U_{\beta}(\Q), U_{-\beta}(\Q) \rangle$, we can view $\ad_{Y}$ as a raising operator which will be injective on the lowest weight space $\frakg_{\alpha}$. Since the two spaces are of the same dimension we get an isomorphism. 
  
   Next, let $X_1,X_2 \in \frakg_{\alpha + \beta}$.
   Recall that $[X_1,X_2] \in \frakg_{2(\alpha + \beta)}$.
   In particular, both $X_1$ and $X_2$ commute with $[X_1,X_2]$
   and the Campbell--Hausdorff series gives $\exp(X_1+X_2)=\exp(X_1)\exp(X_2)\exp(-\frac{1}{2}[X_1,X_2])$. As $\exp(-\frac{1}{2}[X_1,X_2])$ is in $U_{2(\alpha+\beta)}$ this establishes that the above is an isomorphism of abelian groups, and we define a $\glofield$-vector space structure on $U_{\alpha+\beta}/U_{2(\alpha+\beta)}$ via this identification. 
\end{proof}
We now turn to prove Theorem \ref{thm:main} in the arithmetic case. 
\begin{proof}[Proof of the main theorem, arithmetic case.]
We start with an lV-boomerang subgroup $\Delta < \Gamma = \G(\Oc)$, which we assume is not finite and central. By Theorem \ref{thm:Rag}, if we show that $[\U_{\alpha}(\Oc): \Delta \cap \U_{\alpha}(\Oc)]<\infty$ for every $\alpha \in \Phi$, it would follow that $\Delta$ is of finite index in $\Gamma$; which is the desired result. We start with the non-multipliable roots $\Theta \subset \Phi$. 

By Lemma \ref{lem:intersect_max_rootgroup} we obtain a non-trivial element $e \ne \delta \in \Delta \cap U_{\beta}$ for some long root $\beta \in \Theta$. Let us denote by $E_{\beta}$ the one dimensional subgroup of $U_{\beta}$ spanned by $\delta$. Now given any $\alpha \ne \beta \in \Theta$ and any one-dimensional $\glofield$-subgroup $\E_{\alpha} < \U_{\alpha}$, we have from \cite[Thm. 7.2]{BorelTits1965} a $\glofield$-split subgroup $\H < \G$ containing both $\E_{\alpha}$ and $\E_{\beta}$. 
Now $\Delta \cap \H(\Oc) \in \BoomlV(\H(\Oc))$ and the existence of a nontrivial unipotent element shows that this lV-boomerang subgroup cannot be finite and central. Using the split case of our theorem from the previous section we that $[\H(\Oc):\Delta \cap \H(\Oc)]< \infty$ and in particular $[\E_{\alpha}(\Oc): \Delta \cap \E_{\alpha}(\Oc)] < \infty$. Since this is true for any choice of one-dimensional $\glofield$-subgroup $\E_{\alpha} < \U_{\alpha}$ we conclude that $[\U_{\alpha}(\Oc): \U_{\alpha}(\Oc) \cap \Delta ] < \infty$. Finally, upon replacing the role of $\beta$ with any other long root, the same statement is true also for $\alpha = \beta$. This concludes the proof in the reduced case where $\Phi = \Theta$. 

Take now $\alpha$ with $\alpha,2\alpha \in \Phi$ and suppose $\beta \in \Phi$ is such that $\alpha$ and $\beta$ are the positive basis of a root system of type $BC_2$. Let $X_1,\dots,X_{\dim(\frakg_\alpha)}$ be a basis of $\frakg_{\alpha}$ such that $\exp(X_i) \in \Gamma$.
Take $Y \in \frakg_\beta$ with $\exp(Y) \in \Delta$ a non-trivial element.
    Since $\Delta$ is a boomerang subgroup, for every $X \in \frakg_{\alpha} \cap \exp^{-1}(\Gamma)$ there is $l > 0$ with $[\exp(l X),\exp(Y)] \in \Delta$. 
    The Campbell--Hausdorff formula gives
    \begin{align*}
        [\exp(l X), \exp(Y)] = &\exp(l[X,Y])\cdot\exp(\frac{l^2}{4} [X,[X,Y]])\cdot \\
        &\exp(-\frac{l^2}{24}[Y,[X,[X,Y]]]) \in \Delta.
    \end{align*}
    Since $U_{2\alpha+\beta}U_{2(\alpha + \beta)}$ is central in $[U_\alpha,U_\beta]$, raising this equation to the $r$-th power yields, for all $r \in \Z$
    \begin{align*}
        [\exp(l X), \exp(Y)]^r = &\exp(rl[X,Y])\cdot\exp(\frac{rl^2}{4} [X,[X,Y]])\cdot \\
        &\exp(-\frac{rl^2}{24}[Y,[X,[X,Y]]]) \in \Delta.
    \end{align*}
    We already know that $\exp \colon \frakg_{2\alpha+\beta} \to U_{2\alpha + \beta}$ is a vector space isomorphism defined over $\glofield$
    and that
    $\Delta \cap U_{2\alpha + \beta}$ is a cocompact lattice. Since, by assumption, $\exp(X),\exp(Y) \in \G(\Oc)$, 
    the commutator $\frac{l^2}{4}[X,[X,Y]]$ is contained in $\frakg_{2\alpha+\beta}(\glofield)$ and therefore, for suitable $r \neq 0$, we have that
    $\frac{r l^2}{4}[X,[X,Y]] \in \exp^{-1}(U_{2\alpha+\beta}\cap \Delta)$.
    This means that for all $i=1,\dots,\dim(\frakg_{\alpha})$    
    there exist $r_i,l_i \in \Z_+$ such that 
    $[\exp(l_i X_i), \exp(Y)]^{r_i} \in \Delta$
    and    
    $\frac{r_i l_i^2}{4} [X_i,[X_i,Y]] \in \exp^{-1}(\Delta)$, i.e.
    $\exp(\frac{r_i l_i^2}{4} [X_i,[X_i,Y]]) \in \Delta$.
    But now we obtain that 
    $$\exp(r_i l_i[X_i,Y])\exp(-\frac{r_i l_i^2}{24}[Y,[X_i,[X_i,Y]]]) \in \Delta.$$
    
    Since the $r_i l_i X_i$, with $i=1,\dots,\dim(\frakg_{\alpha})$, form a basis of $\frakg_{\alpha}$, by Lemma~\ref{lem:vectoriso}, the elements
    $\exp(r_il_i[ X_i,Y])U_{2(\alpha + \beta)}$ are a basis of the $\glofield$-vector space $\nicefrac{U_{\alpha+\beta}}{U_{2(\alpha + \beta)}}$ contained in $\Delta/U_{2(\alpha + \beta)}$. Now, since both $[U_{2(\alpha+\beta)} \cap \Gamma:U_{2(\alpha+\beta)} \cap \Delta]<\infty$ and $[\nicefrac{\Gamma U_{\alpha+\beta}}{ U_{2(\alpha+\beta)}}: \nicefrac{\Delta U_{\alpha+\beta}}{U_{2(\alpha+\beta)}}] < \infty$ we deduce that $[U_{\alpha+\beta} \cap \Gamma : U_{\alpha+\beta} \cap \Delta ] < \infty$ and we are therefore done by Theorem \ref{thm:Rag}.
\end{proof}

\subsection{The $S$-arithmetic case}

We turn to the $S$-arithmetic case. Thus $S$ is a finite set of valuations of $\glofield$ containing all the infinite places. We distinguish between the finite and infinite places by setting $S = S_{\fin} \sqcup S_{\infty}$ so that $\Oc_{S} = \{k \in \glofield \ | \ v(k) \ge 0, \ \forall v \not \in S\}$ and $\Oc = \{k \in \glofield \  | \ v(k) \ge 0, \ \forall v \not \in S_{\infty}\}$. Recall that, following a few reductions in the beginning of this section, we assumed that $\G$ is connected and simply connected, and that $\Gamma = \G(\Oc_S)$. Let $\Gamma_0 = \G(\Oc)$. Finally fix $\Delta \in \BoomlV(\Gamma)$ to be an lV-boomerang subgroup that is not finite and central. 

The group $\Gamma$ is an irreducible lattice in the product $G = \prod_{v \in S} \G(\glofield_v)$. Let us write $G = G_{\infty} \times G_{\fin}$ with $G_{\infty} = \prod_{v \in S_{\infty}} \G(\glofield_v)$ and $G_{\fin} = \prod_{v \in S_{\fin}} \G(\glofield_v)$. Denote also $K = \prod_{v \in S_{\fin}} K_v$ a maximal compact subgroup of $G_{\fin}$. In order to prove that $[\Gamma:\Delta] < \infty$ it would be enough to show that $\Delta$, too, is a lattice in $G$.

 By Lemma \ref{lem:intersect_max_rootgroup} we know that $\Delta$ contains a nontrivial unipotent element $\delta$. Replacing $\delta$ by a power we may assume it is in $\Delta \cap \Gamma_0$. Thus $\Delta_0 = \Gamma_0 \cap \Delta$ is a boomerang subgroup of $\Gamma_0$ which is not finite central and the arithmetic case of our theorem shows that $\Delta_0$ is a lattice in $G_{\infty}$. Since $K$ is compact $\Delta_0$ is also a lattice in $G_{\infty} \times K$. Let $\Omega < G_{\infty} \times K$ be a fundamental domain for $\Delta$ in this group. Since $G_{\infty} \times K$ is open in $G$ and $\Omega$ has finite measure there, it would be enough to show that the same set $\Omega$ also serves as a fundamental domain for $\Delta$ in $G$. In other words we need to show that 
 $$\Delta \Omega = \Delta \Delta_0 \Omega = \Delta (G_{\infty} \times K) = G.$$
 This in turn will follow immediately if we show that $\overline{\pi_{\fin} \Delta} = \overline{\pi_{\fin} \Gamma}$, where $\pi_{\fin} : G \rightarrow G_{\fin}$ is the projection. This last statement follows directly from Corollary 
 \ref{cor:rec_dense_ss} proven Section \ref{sec:products}. This concludes the proof. 
 \qed

\subsection{Boomerang simple groups.}
\label{sec:B_simple}
We conclude with the proof of Corollary \ref{cor:boom_simp}. In fact, we show the following more general statement.
\begin{theorem} \label{thm:almost_boom_simp}
   In the setting of Theorem \ref{thm:main} let $G = \G(\glofield)$ and $G^{+}$ the group generated by all the one dimensional unipotent groups. Then $G^{+} < \Delta$ for any lV-boomerang $\Delta \in \BoomlV(G)$ that is not finite and central.  
\end{theorem}
\begin{lemma} \label{lem:fi_finite_S}
Let $\glofield$ be a number field, $\Oc$ its ring of integers and $m \in \N$. Let $\Sigma < \glofield^m$ be a subgroup of the vector group $\glofield^m$ with the property that $[\Oc_S^m: \Oc_S^m \cap \Sigma] < \infty$ for every finite set of primes $S$. Then $\Sigma = \glofield^m$. 
\end{lemma}
\begin{proof}
Take $S$ finite and take an integer prime number with $p | [\Oc^m_S : \Oc^m_S \cap \Sigma]$. Let $T \supset S$ be such that $p$ is invertible in $\Oc_T$.
Then $p | [\Oc^m_T : \Oc^m_T \cap \Sigma]$, because there is an embedding of groups $\Oc^m_S/\Oc^m_S \cap \Sigma \to \Oc^m_T / \Oc^m_T \cap \Sigma$. In particular, by the classification of finite, abelian groups we know that $\Oc^m_T / \Oc^m_T \cap \Sigma$ has an element $[x]$ of order $p$.
However, since $p$ is invertible in $\Oc_T$, multiplying by $p$ is an isomorphism $\Oc^m_T \to \Oc^m_T$, therefore $[\Oc^m_T : \Oc^m_T \cap \Sigma] = [\Oc^m_T : \Oc^m_T \cap p\Sigma]$. But since $\Sigma$ is a subgroup we also have $p\Sigma \subset \Sigma$. Together this gives $p\Sigma=\Sigma$, leading to a contradiction with $x \notin \Sigma$ and $px \in \Sigma$.
\end{proof}
\begin{proof}[Proof of Corollary \ref{cor:boom_simp}]
Let $\Delta < G$. Assume $\Delta < Z(G)$ is not central. Then $\Delta \in \BoomlV(G)$ if and only if $[\G(\Oc_{S}): \G(\Oc_{S}) \cap \Delta] < \infty$ for every finite set of places $S$ containing all the infinite places. Indeed if $\Delta \in \BoomlV(G)$ then there is some set of places $S'$ containing all the infinite places, such that $\Delta \cap \G(\Oc_{S'}) \not < Z(\G(\Oc_{S'}))$, and it follows from Theorem \ref{thm:main} that $[\G(\Oc_{S''}):\G(\Oc_{S''})\cap \Delta] < \infty$ for every $S'' \subset S$, and a posteriori for every $S$. Conversely let $h \in G$ be any element. We can find $S$ such that $h \in \G(\Oc_S)$ and by our assumption $\gamma^n \in \Delta \cap \G(\Oc_S) < \Delta$ for some $n\in \N$ proving that $h$ is a recurrent direction for $\Delta$. 

Let $\U$ be any one-dimensional unipotent subgroup of $\G$ defined over $\glofield$. By the previous paragraph we know that $[\G(\Oc_S):\G(\Oc_S)\cap \Delta]<\infty$. Intersecting with $\U$ we have $[\U(\Oc_S):\Delta \cap \U(\Oc_S)] < \infty$ for any finite set of places $S$. Now Lemma \ref{lem:fi_finite_S} implies that in fact $\Delta > \U(\glofield)$ and consequently $\Delta > G^{+}$, which is generated by all the groups of type $\U(\glofield)$. This completes the proof. 
\end{proof}
When $\G$ is assumed to be simply connected, the Kneser--Tits conjecture, which holds for number fields (see \cite{Gille:KT}), implies that $\G(\glofield) = G^+$. Hence Corollary \ref{cor:boom_simp} as stated in the introduction follows.

The following corollary was suggested to us by Adrien Le Boudec. It can be thought of as generalizing some of the results in \cite{CP:stabilizers} and \cite{PetersonThom2016} to the boomerang setting.
\begin{corollary} \label{cor:boudec}
In the setting of Theorem \ref{thm:main}, let  $\Gamma < \Lambda < \G(\Q)$ and $\Delta \in \BoomlV(\Lambda)$. Then either $\Delta$ is finite and central or $[\Gamma: \Delta \cap \Gamma] < \infty.$ 
\end{corollary}

\begin{proof}
Let now $\Gamma < \Lambda < \G(\Q)$ and assume that $\Delta \in \BoomlV(\Lambda)$ is a boomerang subgroup that is not central and finite in $\Lambda$.
Lemma \ref{lem:rec_intersection_subgroup} guarantees that $\Delta \cap \Gamma \in \Boom(\Gamma)$. Hence, if we only show that $\Delta \cap \Gamma$ is not a finite central subgroup of $\Gamma$, we are done by Theorem \ref{thm:main}.

Since $\Lambda$ contains $\Gamma$ it is Zariski dense by Borel's density theorem. It follows from Theorem \ref{thm:BorelDensity} that also $\Delta$ is Zariski dense. Hence we can fix an element $\delta = vtpu \in \Delta$ that is contained in the open Bruhat cell. 

\vspace{3mm}

\noindent \emph{Claim:} Commutators of the form $[\delta,\gamma^{k}]$ with $\delta \in \Delta, \gamma \in \Gamma, k \in \N$ are contained in $\Gamma \cap \Delta$ for infinitely many values of $k \in \N$.
Indeed 
since $\delta \in \Comm_{G}(\Gamma)$ we can fix an $r$ such that $[\delta,\gamma^{rl}]$ is contained in $\Gamma$ for every $l \in \Z$. Take $k =lr$. Now the lV-boomerang condition guarantees that $[\delta,\gamma^k]=[\delta,\gamma^{rl}] \in \Gamma \cap \Delta$ for our fixed $r$ and infinitely many values of $l$. This shows the claim.

Let now $\alpha' \in \Phi$ be such that $\alpha'$ neither commutes with, nor is opposite to, the maximal root $\tilde \alpha$.
By the claim, for infinitely many $k \in \Z$ we have
$[\delta,x_{\tilde \alpha}(k)] \in \Gamma \cap \Delta$.
Considering the double commutator
$$\epsilon = [[\delta,x_{\tilde \alpha}(k)],v x_{\alpha}(lm) v^{-1}],$$
since $v \in \Comm_G(\Gamma)$ we can fix $m \in \N$ 
such that $vx_{\alpha}(lm)v^{-1} \in \Gamma$ for every $l \in \Z$.
So again by the claim we know that $\epsilon \in \Gamma \cap \Delta$. By Lemma \ref{lem:double commutator Chevalley}, $\epsilon$ is non-trivial unipotent and therefore cannot sit in the center of $\Gamma$.
\end{proof}

\section{Dense subgroups of simple \texorpdfstring{$p$}{p}-adic Lie groups} \label{sec:products}
This section is devoted to the proof of Theorem \ref{thm:rec_dense}. In fact as promised in the introduction we will prove the following more general version of the theorem:

\begin{theorem}\label{thm:rec_dense_ss_p}
    Let $p$ be any prime.
    Let $\G$ be a semisimple, connected algebraic group over $\Q_p$ without anisotropic factors.  Let $G$ be the subgroup of $\G(\Q_p)$ generated by the unipotent elements. Let $\Gamma < G$ be dense in the Hausdorff topology and $\Delta < \Gamma$ an lV-boomerang subgroup.
    Then $\overline{\Delta} \lhd G$ is a normal subgroup.
\end{theorem}

As a corollary we obtain:

\begin{corollary}\label{cor:rec_dense_ss}
    Let $S$ be a finite set of indices. For each $i \in S$ let  $\locfield_i$ be a non-Archimedean local field of characteristic zero.
Let $\G_i(\locfield_i)$ be the $\locfield_i$-points of a $\locfield_i$-isotropic, simple, algebraic group defined over $\locfield_i$,  $G_i = \G_i(\locfield_i)^{+}$ the subgroup generated by the unipotent elements and $G = \prod_{i \in S} G_i$. 

Let $\Gamma < G$ be a countable dense subgroup of $G$. 
\begin{enumerate}
    \item The closure $\overline{\Delta} \lhd G$ is normal in $G$ for every $\Delta \in \BoomlV(\Gamma)$.
    \item If $\left| \Gamma \cap \ker(\Pr_i) \right| < \infty$ for every $i \in S$, where $\Pr_i\colon G \rightarrow G_i$ is the projection, then $\overline{\Delta} = G$ for
    every infinite lV-boomerang subgroup of $\Gamma$. 
\end{enumerate}
\end{corollary}

\subsection{Boomerangs in $p$-adic Lie groups}
We collect in this section a few general results about lV-boomerang subgroups of dense subgroups in $p$-adic Lie groups.

\begin{lemma}\label{lem:power lie algebra}
    Let $G$ be a Lie group over a non-Archimedean local field of characteristic $0$.
    Let $H < G$ be a closed subgroup.
    Let $R := \{g \in G \mid \exists n \in \N: \, g^n \in H\}$.
    If $\overline{R}$ contains a neighborhood of $e$, then $H$ is open.
\end{lemma}

\begin{proof}
    Let $\Omega < \Lie(G)$ be a compact, open subgroup such that $\exp \colon \Omega \to G$ is defined, and such that $\exp(\Omega \cap \Lie(H))=\exp(\Omega) \cap H$; see \cite[Ch. III, \S 7, no. 2., Prop. 3]{Bourbaki1972}.
    We have to show that $\Lie(G)=\Lie(H)$.
    Let $\zeta \in \Omega$ with $\exp(\zeta) \in R$.
    There is $n>0$ with $\exp(n \zeta) = \exp(\zeta)^n \in H$, which implies $n \zeta \in \Lie(H)$ and then $\zeta \in \Lie(H)$. We showed that $\Lie(H)$ contains $\Omega \cap \exp^{-1}(R)$, whose closure has non-empty interior in $\Lie(G)$. The result follows because $\Lie(H)$ is a closed subvectorspace.
\end{proof}

\begin{proposition}\label{prop:Lie ideal}
    Let $G$ be a Lie group over a non-Archimedean local field of characteristic $0$.
    Let $\Gamma < G$ be a dense, countable subgroup and $\Delta  < \Gamma$ an lV-boomerang subgroup.
    Then $\Lie(\overline{\Delta})$ is an ideal in $\Lie(G)$.
\end{proposition}

\begin{proof}
Let $H = \overline{\Delta} < G$ be the closure of $\Delta$ inside $G$.
Let $L < \Lie(G)$ be its Lie algebra.
By dimension considerations we can find a finitely generated subgroup $\Delta' < \Delta$ whose closure $H' = \overline{\Delta'}$ shares the same Lie algebra; indeed,
$\Delta'$ can be taken to be generated by the exponentials of a basis of $L$ inside $\Omega$, where $\Omega$ is as in Lemma \ref{lem:power lie algebra}.

Applying the lV-boomerang condition we obtain for every $\gamma \in \Gamma$ an $m \in \N$ with the property that $\gamma^{m} \Delta' \gamma^{-m} < \Delta$. Passing to the closure in $G$ we see that $\gamma^{m} H' \gamma^{-m} < H$ so that $\Ad_{\gamma^m}L = L$ or, in other words, $\gamma^m \in N_G(L)$.
Since $\Gamma$ is dense in $G$,
Lemma \ref{lem:power lie algebra} implies that $N_G(L)$ is open.
Now the result follows from \cite[Ch. III, \S 9, no. 4, Prop. 10]{Bourbaki1972}.
\end{proof}

Though not relevant to the present paper, we remark that this result implies that $\overline{\Delta}$ represents an element in the structure lattice of $G$ introduced by Caprace--Reid--Willis \cite{CapraceReidWillis2017}.

\begin{lemma}\label{lem:comm into small}
    Let $G$ be a topological group and $g \in G$. Then, for every neighbourhood $U$ of the identity there exists a neighborhood $V \subset U$ of the identity with  $[g,h] \in U$ for all $h \in V$.
\end{lemma}

\begin{proof}
    The map $c \colon G \to G, g \mapsto [g,h]$ is continuous.
    Set $V := c^{-1}(U) \cap U$, it is open and contains the identity.
\end{proof}



Recall that the quasi-center of a topological group $G$, denoted $\operatorname{QZ}(G)$, is the collection of all elements with an open centralizer. This is always a subgroup. For $p$-adic Lie groups this is the kernel of the adjoint representation and in particular closed. In particular in the setting of Theorems \ref{thm:rec_dense_ss_p}, \ref{cor:rec_dense_ss}, when $\G$ is a semisimple connected algebraic group over $\Q_p$, the quasi-center of $\G(\Q_p)$ is equal to the center and finite.
\begin{proposition}\label{prop:boom in qz}
    Let $G$ be a Lie group over a non-Archimedean local field of characteristic $0$. Let $\Gamma < G$ be a dense, countable subgroup. Then every discrete  subgroup $\Delta \in \BoomlV(\Gamma)$ is contained in the quasi-center of $G$.
\end{proposition}

\begin{proof}
Let $\delta \in \Delta$. Choose $U < G$ compact and open such that $U \cap \Delta = \trivgp$. 
By Lemma \ref{lem:comm into small}, there is a compact, open subgroup $V < G$ such that $[\delta,g] \in U$ for every $g \in V$.
The lV-boomerang condition implies that, for every $\gamma \in V \cap \Gamma$ there is $m>0$ with $[\delta,g^m]=e$, or in other words, $C_G(\delta)$ contains a power of every element in $\Gamma \cap V$.
Now Lemma \ref{lem:power lie algebra} delivers the desired conclusion.
%
\end{proof}

To tackle boomerang subgroups with compact closure, we employ Willis' theory of the scale function and tidy subgroups.
The interested reader can find an introduction into this subject in \cite{Willis2018}.

\begin{lemma}\label{lem:cpct boom tidy subgroup}
    Let $G$ be a totally disconnected, locally compact group. Let $\Gamma < G$ be a countable subgroup and $\Delta < \Gamma$ such that $\overline{\Delta}$ is compact. Let $\gamma \in \Gamma$ and let $U$ be a tidy subgroup for $\gamma$. If $\gamma$ is an lV-recurrent direction for $\Delta$, then $\Delta \cap U < \bigcap_{n \geq 0} \gamma^n U \gamma^{-n}$. 
\end{lemma}

\begin{proof}
    This is a direct consequence of \cite[Lemma 9]{Willis1994}.
\end{proof}



For the following, we refer to Wang \cite[Sect. 3]{Wang1984}.
For a $p$-adic Lie group $G$ and $g \in G$, there is a unique decomposition into $g$-invariant subspaces $\Lie(G)= P_g \oplus M_g \oplus E_g$, where $\Ad_g$ contracts $M_g$, $\Ad_{g^{-1}}$ contracts $P_g$ and $\Ad_g$ acts on $E_g$ with relatively compact orbits. Moreover, $E_g$ is a Lie subalgebra of $\Lie(G)$ that satisfies $E_g=E_{g^{-1}}$ and $E_g = \Lie(G)$ if and only if $s_G(g)=s_G(g^{-1})=1$, which is equivalent to $g$ normalizing some compact, open subgroup (see \cite{Gloeckner1998}); here $s_G \colon G \to \N_+$ is the scale function defined by Willis \cite{Willis1994}.

\begin{corollary}\label{cor:cpct in Lie}
    Let $G$ be a Lie group over a non-Archimedean local field of characteristic $0$. Let $\Gamma < G$ be a countable subgroup and $\Delta < \Gamma$ a lV-boomerang subgroup such that $\overline{\Delta}$ is compact. 
    Then, $\Lie(\overline{\Delta}) \subset \bigcap_{\gamma \in \Gamma}  E_\gamma$.

    In particular, if $G$ not uniscalar, $\Lie(G)$ is simple and $\Gamma$ is dense in $G$, then $\Delta$ is finite.
\end{corollary}

\begin{proof}
Let $\gamma \in \Gamma$ and choose $U$ tidy for $\gamma$.
Recall that $U$ is also tidy for $\gamma^{-1}$.
By Lemma \ref{lem:cpct boom tidy subgroup} we have that $\Delta \cap U < \bigcap_{n \in \Z} \gamma^n U \gamma^{-n}$.
Now \cite[Thm. 3.5]{Gloeckner1998} 
implies that $\Lie(\bigcap_{n \in \Z} \gamma^n U \gamma^{-n}) = E_\gamma$, which implies $\Lie(\overline{\Delta}) < E_\gamma$.

For the second part, note that $\bigcap_{\gamma \in \Gamma}  E_\gamma$ is a Lie subalgebra normalized by $\Gamma$ and therefore normalized by $G$. Since $\Lie(G)$ is simple, we have that either $\bigcap_{\gamma \in \Gamma}  E_\gamma$ is trivial, or equal to $\Lie(G)$. But if $\bigcap_{\gamma \in \Gamma}  E_\gamma=\Lie(G)$, then $s_G(\Gamma)=1$, which implies that $s_G(G)=1$ by continuity of the scale function. 
%
%
\end{proof}

If $G$ is a compactly generated, uniscalar $p$-adic Lie group, then $G$ is pro-discrete by the main result of \cite{GloecknerWillis2001}.
In particular, there are arbitrarily small compact, open, normal subgroups $U \lhd G$, and $\Gamma \cap U$ is a boomerang subgroup in $\Gamma$.
Also, if $\operatorname{QZ}(G)=G$, then $G$ is trivially uniscalar. Recently 
Caprace--Minasyan--Osin \cite{CapraceMinasyanOsin2023} construct a multitude of topologically simple $p$-adic Lie groups $G$ with abelian Lie algebra (and in particular $G=\operatorname{QZ}(G)$). Their examples are \emph{extraordinary} in the sense of Gl\"ockner \cite{Gloeckner2017}.
It would be very interesting to know something about $\overline{\Delta}$, where $\Delta < \Gamma$ is a boomerang in a dense, countable subgroup of an extraordinary $p$-adic Lie group.

\subsection{Boomerangs in dense subgroups of semisimple Lie groups}
The following lemma is probably well-known.

\begin{lemma}\label{lem:open in ss}
    Let $G = \prod_{i \in I} G_i$ be a product of locally compact groups. Let $i \in I$, and denote by $\pr_i \colon G \to G_i$ the projection.
    \begin{enumerate}
    \item
    Assume that every proper open subgroup of $G_i$ is compact and not normal.
    Let $O < G$. If $\pr_i(O)$ is non-compact and $O \cap G_i$ is open, then $G_i < O$.
    \item Assume that every proper normal subgroup of $G_i$ is contained in the center and
    $G_i$ is not 2-step-nilpotent.
    Let $N \lhd G$ be normal. If $\pr_i(N)$ is not central, then $G_i < N$. Consequently, if every factor of $G$ has the required properties, then all subnormal subgroups are normal.
    \end{enumerate}
\end{lemma}

\begin{proof}
    (1) By assumption $\pr_i(O)$ is open and non-compact in $G_i$, so $\pr_i(O) = G_i$.
    Since $G_i \cap O \lhd O$, we also have $G_i \cap O \lhd \pr_i(O) = G_i$. But $G_i \cap O$ is open in $G_i$, and therefore $G_i \cap O = G_i$ and $G_i < O$.

    (2) Since $\pr_i(N)$ is not central it is, by assumption, equal to $G_i$. Let $g_i = \pr_i(g)$ and $h_i \in G_i$ with $[g_i,h_i] \not \in ZG_i$. Then $[g,h_i]=[g_i,h_i] \in N \cap G_i$, and since $N \cap G_i$ is normal in $G_i$, it follows that $G_i < N$.
\end{proof}

\begin{proof}[Proof of Theorem \ref{thm:rec_dense_ss_p}]
    We write $\G = \G_1 \dots \G_n$, where the $\G_i$ are the simple factors of $\G$, and $G_i$ the subgroup of $\G(\Q_p)$ generated by the unipotent elements.
    Set $H := \overline{\Delta}$.
    By Proposition \ref{prop:Lie ideal} the Lie algebra $\Lie(H)$ is an ideal in $\Lie(G)$, hence it is the direct sum $\bigoplus_{i \in I} \Lie(G_i)$ with $I$ a subset of the indices, and $H \cap G_i$ is open in $G_i$ for $i \in I$. We have to show that $G_i < N_G(H)$ for every $i=1,\dots,n$; and we will show something stronger. Namely, $G_i < H$ if $i \in I$ and $G_i < C_G(H)$ otherwise.

    By Lemma \ref{lem:surj_image}, if $\Delta < \Gamma$ is an lV-boomerang, then $\pr_i(\Delta) < \pr_i(\Gamma)$ is an lV-boomerang.

    For $i \in I$, $\pr_i(\Delta)$ is infinite so by Corollary \ref{cor:cpct in Lie} it cannot be precompact. Hence the closure of $\pr_i(H)$ is not compact, and since $H \cap G_i < G_i$ is open, we get from Lemma \ref{lem:open in ss}(1) that $G_i < H$.

    Consider now $G' := \prod_{i \notin I} G_i$ and let $\pr \colon G \to G'$ be the projection.    
 By the previous paragraph we know that $\prod_{i \in I} G_i < H$, so $\pr(H) = H \cap G'$ and in particular $\pr(H) < G'$ is discrete.

    By Proposition \ref{prop:boom in qz} we get $\pr(\Delta) < \operatorname{QZ}(G')$.
    Since the quasi-center equals the center, which is closed, we obtain $\pr(H) < Z(G)$ and also $G' < C_G(H)$.



%
    
%
%
\end{proof}

\begin{proof}[Proof of Corollary \ref{cor:rec_dense_ss}]
    Let $H := \overline{\Delta}$. Let $G_i$ be one of the factors of $G$.
    We denote by $\pr_i \colon G \to G_i$ the projection.
    
    For a prime $p$, let $G_p$ be the product of all the factors of $G$ such that the characteristic of the residue field is $p$ (i.e. $\Q_p < \locfield_i$).
    In particular, $G_p$ is a semi-simple $p$-adic Lie group and satisfies the assumptions of Theorem \ref{thm:rec_dense_ss_p}.
    We have that $G = \prod_p G_p$, and \cite[Cor. 2.3]{Gloeckner2006} implies that $\prod_p H \cap G_p \lhd H$ is an open, normal subgroup.
    We denote by $\pr_p \colon G \to G_p$ the projection.

    \emph{Step 1:} $H \cap G_p \lhd G_p$

    Since $\Delta < \Gamma$ is an lV-boomerang, also $\pr_p(\Delta) < \pr_p(\Gamma)$ is an lV-boomerang. Theorem \ref{thm:rec_dense_ss_p} gives us that $\overline{\pr_p(\Delta)} \lhd G_p$.
    We have $$H \cap G_p = \pr_p(H \cap G_p) \lhd \pr_p(H) < \overline{\pr_p(\Delta)} \lhd G_p.$$
    Taking closures, since $\overline{\pr_p(H)} = \overline{\pr_p(\Delta)}$,
    we see that $H \cap G_p$ is subnormal in $G_p$.
    Lemma \ref{lem:open in ss}(2) gives us that $H \cap G_p \lhd G_p$.
    
    \emph{Step 2:} If $\pr_i(H \cap G_p)$ is discrete, then $\pr_i(H)$ is central.

    Assume $\pr_i(H \cap G_p)$ is discrete and let $\delta \in \Delta$.
    To show that $\pr_i(\delta)$ is central, it suffices to show that $\pr_i(\delta)$ is in the quasi-center of $G_i$.
    Let $O \subset G$ be an open subset such that $H \cap O = \prod_p H \cap G_p$.
    Let $U < G$ be a compact, open subgroup contained in $O$, and such that $\pr_i(U) \cap \pr_i(H \cap G_p) = \trivgp$.
    Let $V < U$ be a compact, open subgroup such that $[\delta,g] \in U$ for every $g \in V$.
    Let $\gamma \in \Gamma \cap V$.
    Then there is $m>0$ with $[\delta,\gamma^m] \in \Delta \cap U$,
    and because $U \subset O$ we see that
    $[\pr_i(\delta),\pr_i(\gamma)^m] \in \pr_i(H \cap G_p) \cap \pr_i(U),$
    which implies $\pr_i(\gamma)^m \in C_G(\pr_i(\delta))$.
    But $\pr_i(\Gamma)$ is dense in $G_i$, so from Lemma \ref{lem:power lie algebra} we see that $\pr_i(\delta)$ is in the quasi-center and therefore in the center of $G_i$.
    It follows that $\pr_i(H) < Z(G)$.
    

    \emph{Step 3:} Proof of Corollary \ref{cor:rec_dense_ss}(1).

    If $\pr_i(H)$ is central, then $G_i < C_G(H)$.
    If $\pr_i(H)$ is not central, then by the previous step, already $\pr_i(H \cap G_p)$ is non-discrete, and $H \cap G_p$ is infinite. Now that $H \cap G_p \lhd G_p$ proves, with Lemma \ref{lem:open in ss}(2), that $G_i < H \cap G_p < H$.

    \emph{Step 4:} Proof of Corollary \ref{cor:rec_dense_ss}(2).

    Since $\Delta$ is infinite and the kernel of the projections are finite, $\pr_i(\Delta)$ is never finite. Now (2) follows from (1).
\end{proof}

 \bibliographystyle{alpha}
 \bibliography{rec}
 




\end{document}